\numberwithin{equation}{section}
\newtheorem{theorem}{Theorem}[section]
\newtheorem{lemma}[theorem]{Lemma}
\newtheorem{lem}[theorem]{Lemma}
\theoremstyle{definition}
\newtheorem*{remark}{Remark}
\def\Xint#1{\mathchoice
{\XXint\displaystyle\textstyle{#1}}%
{\XXint\textstyle\scriptstyle{#1}}%
{\XXint\scriptstyle\scriptscriptstyle{#1}}%
{\XXint\scriptscriptstyle\scriptscriptstyle{#1}}%
\!\int}
\def\XXint#1#2#3{{\setbox0=\hbox{$#1{#2#3}{\int}$}
\vcenter{\hbox{$#2#3$}}\kern-.5\wd0}}
\def\dashint{\Xint-}
\newsavebox{\@brx}
\newcommand{\llangle}[1][]{\savebox{\@brx}{\(\m@th{#1\langle}\)}%
  \mathopen{\copy\@brx\kern-0.5\wd\@brx\usebox{\@brx}}}
\newcommand{\rrangle}[1][]{\savebox{\@brx}{\(\m@th{#1\rangle}\)}%
  \mathclose{\copy\@brx\kern-0.5\wd\@brx\usebox{\@brx}}}
\newcommand{\abs}[1]{\left|#1\right|}
\newcommand{\Cn}{\ensuremath{\mathbb{C}^n}}
\newcommand{\A}{\ensuremath{\text{{A}}}}
\newcommand{\R}{\ensuremath{\mathbb{R}}}
\newcommand{\Rd}{\ensuremath{\mathbb{R}^d}}
\newcommand{\D}{\ensuremath{\MC{D}}}
\newcommand{\MC}[1]{\ensuremath{\mathcal{#1}}}
\newcommand{\J}{\ensuremath{\mathscr{J}}}
\newcommand{\F}{\ensuremath{\mathscr{F}}}
\newcommand{\inrd}{\ensuremath{\int_{\Rd}}}
\newcommand{\innp}[1]{\left< #1 \right>}
\newcommand{\op}[1]{\ensuremath|#1|_{\text{op}}}
\newcommand{\Aonesc}[1]{\ensuremath{[#1]_{\text{A}_\infty ^\text{sc}}}}
\newcommand{\W}{\ensuremath{{\mathcal W}}}
\newcommand{\pr}[1]{\ensuremath{\left(#1\right)}}
\newcommand{\Ctwon}{\ensuremath{\mathbb{C}^{2n}}}
\DeclareMathOperator*{\esssup}{ess\,sup}
\def\Xint#1{\mathchoice
{\XXint\displaystyle\textstyle{#1}}%
{\XXint\textstyle\scriptstyle{#1}}%
{\XXint\scriptstyle\scriptscriptstyle{#1}}%
{\XXint\scriptscriptstyle\scriptscriptstyle{#1}}%
\!\int}
\def\XXint#1#2#3{{\setbox0=\hbox{$#1{#2#3}{\int}$}
\vcenter{\hbox{$#2#3$}}\kern-.5\wd0}}
\def\dashint{\Xint-}
\def\avgint{\Xint-}
\title{Weak endpoint bounds for matrix weights}
\author[D. Cruz-Uribe]{David Cruz-Uribe, OFS}
\address{Department of Mathematics\\
University of Alabama, Box 870350, 345 Gordon Palmer Hall.}\email{dcruzuribe@ua.edu}
\author[J. Isralowtiz]{Joshua Isralowitz}
\address{Department of Mathematics and Statistics\\
SUNY Albany, 1400 Washington Ave., Albany, NY, 12222.}\email{jisralowitz@albany.edu}
\author[K. Moen]{Kabe Moen}
\address{Department of Mathematics\\
University of Alabama, Box 870350, 345 Gordon Palmer Hall.}\email{kabe.moen@ua.edu}
\author[S. Pott]{Sandra Pott}
\address{Centre for Mathematical Sciences, University of Lund, PO Box 118, 22100 Lund, Sweden}\email{sandra@maths.lth.edu}
\author[I. Rivera-R\'{\i}os]{Israel P. Rivera-R\'{\i}os}
\address{Instituto de Matem\'atica de Bah\'{\i}a Blanca (INMABB), Departamento de Matem\'atica, Universidad Nacional del Sur (UNS) - CONICET, Av. Alem 1253, Bah\'{\i}a Blanca, Argentina}
\email{israel.rivera@uns.edu.ar}
\subjclass[2010]{Primary 42B20, 42B25, 42B35}
\keywords{Matrix weights, matrix $A_p$, maximal operators,
  Calder\'on-Zygmund operators, commutators, sparse operators}
\thanks{Cruz-Uribe is  supported by research funds from the Dean of
  the College of Arts \& Sciences, the University of Alabama;
  Isralowitz and Moen are
  supported by the Simons Foundation; {  Rivera-R\'{\i}os} is supported by grant  PIP (CONICET) 11220130100329CO}
\begin{document}

\maketitle

\begin{abstract}
  We prove quantitative matrix weighted endpoint estimates for the
  matrix weighted Hardy-Littlewood maximal operator,
  Calder\'on-Zygmund operators, and commutators
  of CZOs with scalar BMO functions,  when the matrix weight is in
  the class $\A_1$ introduced by M.~Frazier and S.~Roudenko.
\end{abstract}

\section{Introduction}

In this paper we consider weak-type endpoint estimates for operators
with matrix weights.  In order to put our results into context, we
first review briefly the scalar case.  It is well-known that the
Hardy-Littlewood maximal operator and all Calder\'{o}n-Zygmund
operators (CZOs)  are bounded on $L^p(w)$ when $p>1$ and $w\in A_p$:
that is,
$$[w]_{A_p}
=\sup_{Q}\left(\dashint_Q w(x)\,dx\right)\left(\dashint_Q
  w(x)^{1-p'}\,dx\right)^{p-1}<\infty,$$
where the supremum is taken over all cubes $Q$ with sides parallel to
the coordinate axes.  These operators are not bounded on $L^1(w)$, but
do map $L^1(w)$ into $L^{1,\infty}(w)$ when $w\in A_1$:  that is, for
every cube $Q$ and almost every $x\in Q$,
\[ \avgint_Q w(y)\,dy \leq [w]_{A_1} w(x); \]
here $[w]_{A_1}$ is the infimum of all constants such that this
inequality holds.

However, there is another version of the endpoint inequality.   Given
a weight $w$ and an operator $T$, for $1<p<\infty$, define  $T_wf= w^{-\frac{1}{p}}
T(w^{\frac{1}{p}}f)$.  Then it is immediate that strong-type
inequalities for   $T_w$ are equivalent to weighted strong-type
estimates for~$T$:   $T_w : L^p(\R^d) \rightarrow L^p(\R^d)$ if and
only if $T:L^p(w)\rightarrow L^p(w)$.   Consequently, we
have that if $T$ is a CZO or the Hardy-Littlewood maximal operator,
and if $w\in A_p$, then $T_w : L^p(\R^d) \rightarrow
L^{p,\infty}(\R^d)$.    This suggests that when $p=1$ and $w\in A_1$, we should have
weak-type inequalities of the form
\begin{equation} \label{eqn:MW1}
 |\{x:w(x) T(fw^{-1})(x)>\lambda\}|\lesssim
  \frac{1}{\lambda}\int_{\R^d}|f(x)| \, dx.
\end{equation}
Muckenhoupt and Wheeden~\cite{MW} first proved such inequalities when
$d=1$ for the Hardy-Littlewood maximal operator and the Hilbert
transform; their results were extended to higher dimensions and
arbitrary CZOs (as well as the maximal operator) in~\cite{CMP}.
These estimates are much more delicate and even for the maximal
operator are much more difficult to prove than the  more standard
endpoint result considered above.  Moreover,  it was shown in~\cite{MW}
that the $A_1$ condition is not necessary even for the maximal
operator; they showed, for instance, that this endpoint inequality holds when
$w(x)=|x|^{-1}$.

\begin{remark}
There has been a great deal of interest in generalizing these results to the
two-weight setting; the original motivation (even in the one-weight
setting) is that such inequalities
arise naturally in the {  theory of interpolation} with change of measure of
Stein and Weiss~\cite{stein-weiss58}.    See~\cite{CR} for a short history.
\end{remark}

We now consider matrix weights:  our goal is to
generalize~\eqref{eqn:MW1} to this setting.  To state our results we
first give some basic definitions.  For more details,
see~\cites{CRM,G,MR1928089}.  A matrix weight $W$ is an $n\times n$
self-adjoint matrix function with locally integrable entries such that
$W(x)$ is positive definite for a.e. $x\in \R^d$.
Define $W^r$ for any $r\in \R$ via diagonalization.  Define the operator norm of $W(x)$ by
\[ \op{W(x)} = \sup_{\substack{{e}\in \Cn\\|{e}|=1}}|W(x){e}|.  \]
Finally, for all $1 \leq p< \infty$, define $L^p(W)$ to be the
collection of measurable, vector-valued functions
$f:\R^d \rightarrow \Cn$ such that
$$\|{f}\|_{L^p(W)}=\left(\int_{\R^d} |W(x)^{\frac1p} f(x)|^p\,dx\right)^{\frac{1}{p}}<\infty.$$
%

%

Given a linear operator $T$, $1\leq p<\infty$, and a matrix weight $W$,  define the matrix
operator $T_W$ by
\begin{equation} \label{eqn:TW}
  T_Wf(x)=W(x)^{\frac1p}T(W^{-\frac1p}f)(x);
\end{equation}
as before, $T_W :
L^p(\R^d,\Cn) \rightarrow L^p(\R^d,\Cn)$ if and only if $T:L^p(W)
\rightarrow L^p(W)$.    It was shown by Christ and
Goldberg~\cites{CG,G} that for $p>1$, if
$T$ is a CZO, then this inequality holds if and only if $W$ satisfies
the matrix $\A_p$ condition,
\[ [W]_{\A_p} = \sup_Q \avgint_Q \bigg( \avgint_Q
\op{W(x)^{\frac{1}{p}}W(y)^{-\frac{1}{p}}}^{p'}\,dy\bigg)^{\frac{p}{p'}}
\,dx<\infty. \]

{ Note that this formulation of the matrix $\A_p$ condition is due to
Roudenko~\cite{MR1928089}; see this paper or~\cite{CG,G} for the
earlier, equivalent definition in terms of norms.}  While the maximal
operator is not linear, they showed that a variant of the
maximal operator (now referred to as the Christ-Goldberg maximal operator,
\begin{equation} \label{eqn:MW}
M_W{f}(x)=\sup_{Q\ni x} \dashint_Q
|W(x)^{\frac{1}{p}}W^{-\frac{1}{p}}(y){f}(y)|\,dy,
\end{equation}
is bounded on $L^p(\R^d,\Cn)$ if and only if $W\in \A_p$, $1<p<\infty$.

In the setting of matrix weights, it is unclear how to define the
weak-type space $L^{p,\infty}(W)$; therefore, it is natural to use the
strong-type bounds to get unweighted, weak $(p,p)$ bounds for $T_W$ or
$M_W$.    Here, we argue directly to prove
weak $(1,1)$ estimates for $T_W$ when $T$ is a CZO, and for the Christ-Goldberg maximal
operator  $M_W$.  The appropriate weight class is matrix $\A_1$, first
defined by Frazier and Roudenko~\cite{FR}:   a matrix weight $W$ belongs to $\A_1$ if { 
\begin{equation}\label{eq:A1}
  [W]_{\A_1}= \esssup_{x \in \Rd} \sup_{Q \ni x} \dashint_Q
  \op{W(y)W^{-1}(x)}\,dy < \infty.
\end{equation}
}
Note that in the scalar case, when $n=1$, this definition reduces
immediately to the definition of scalar $A_1$.  If $W\in \A_1$, then
we have that $\op{W(\cdot)}$ is in scalar $A_1$; see~\cite[Lemma~4.4]{CRM}.  Therefore, we can define
the scalar $A_\infty$ constant of $W$ by
\begin{equation*}
  \Aonesc{W} = { \sup_{{e} \in \Cn} \left[\abs{W(\cdot) {e}}\right]_{\text{A}_\infty}.}
\end{equation*}
This constant was first  introduced in the $p = 2$ setting in
\cite{NPTV} and for $1 < p < \infty$ in \cite{CIM}.   In the theory of
scalar weights there are several equivalent definitions of $A_\infty$;
we will use the sharp, Fujii-Wilson definition.  The
precise definition does not directly matter as we will use this
condition indirectly; see \cite{HP,HPR}
for details.

We can now state our first two results.

\begin{theorem} \label{thm:MainMax}
 Define $M_W$ by \eqref{eqn:MW} with $p=1$.   Given $W\in \A_1$, then for all ${f}\in L^1(\R^d,\Cn)$ and
  $\lambda>0,$
  \begin{equation}\label{eqn:max}
    |\{x\in \R^d: M_W{f}(x)>\lambda\}|
    \lesssim \frac{[W]_{\A_1} \Aonesc{W} }{\lambda} \int_{\R^d}
    |{f}(x)|\,dx.
  \end{equation}
\end{theorem}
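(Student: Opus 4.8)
The plan is to pass, by means of reducing matrices, from $M_W$ to a scalar-flavoured weak-type problem, and then to run a Calder\'on--Zygmund stopping-time argument in which the factor $\Aonesc W$ is produced by a matrix reverse H\"older inequality.

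I would first make the standard reductions. By the one-third trick it suffices to bound the dyadic operator $M_W^{\MC D}f(x)=\sup_{Q\in\MC D,\,Q\ni x}\dashint_Q|W(x)W^{-1}(y)f(y)|\,dy$ for each of the $3^d$ adjacent dyadic systems $\MC D$; the argument below works for an arbitrary $f\in L^1$, so no density step is needed. The scalar input is that for every unit vector $e$ the weight $w_e(\cdot)=|W(\cdot)e|$ lies in $A_1$ with $[w_e]_{A_1}\le[W]_{\A_1}$ --- for $x\in Q$ one has $\dashint_Q|W(y)e|\,dy=\dashint_Q|W(y)W^{-1}(x)W(x)e|\,dy\le\dashint_Q\op{W(y)W^{-1}(x)}\,dy\,|W(x)e|\le[W]_{\A_1}|W(x)e|$ by \eqref{eq:A1} --- and that $[w_e]_{A_\infty}\le\Aonesc W$ by the definition of $\Aonesc W$. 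Hence, by the quantitative self-improvement of the $A_\infty$ condition, all the $w_e$ satisfy a common reverse H\"older inequality $\pr{\dashint_Q|W(y)e|^s\,dy}^{1/s}\lesssim\dashint_Q|W(y)e|\,dy$ with exponent $s=1+c_d/\Aonesc W>1$ (we may take $s\le 2$) and an absolute constant.

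Next, the pointwise estimate. Let $\MC W_Q$ be the $L^1$ reducing matrix of $W$ on $Q$, so $|\MC W_Q e|\approx_n\dashint_Q|W(y)e|\,dy$. Factoring $W(x)W^{-1}(y)=\pr{W(x)\MC W_Q^{-1}}\pr{\MC W_Q W^{-1}(y)}$ and using \eqref{eq:A1} once more --- it gives $|\MC W_Q W^{-1}(y)e|\lesssim_n\dashint_Q|W(z)W^{-1}(y)e|\,dz\le[W]_{\A_1}|e|$ for a.e.\ $y\in Q$ --- one obtains $\dashint_Q|W(x)W^{-1}(y)f(y)|\,dy\lesssim_n[W]_{\A_1}\,\op{W(x)\MC W_Q^{-1}}\dashint_Q|f|\,dy$. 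Thus it suffices to prove $|\{\Phi f>\mu\}|\lesssim\Aonesc W\,\mu^{-1}\norm f_1$, where $\Phi f(x)=\sup_{Q\in\MC D,\,Q\ni x}\op{W(x)\MC W_Q^{-1}}\dashint_Q|f|\,dy$, and by homogeneity one may take $\mu=1$. For this I would establish two facts about the matrix factor from the uniform scalar reverse H\"older inequality, together with $\op A\le\norm A_{\mathrm{HS}}$ and the subadditivity of $t\mapsto t^{s/2}$: a \emph{matrix reverse H\"older inequality} $\dashint_Q\op{W(x)\MC W_Q^{-1}}^s\,dx\lesssim_n 1$; and, for the localized quantity $N_R(x)=\sup_{Q\in\MC D,\,Q\subseteq R,\,Q\ni x}\op{W(x)\MC W_Q^{-1}}$, the bound $N_R(x)\lesssim_n[W]_{\A_1}\,\op{W(x)\MC W_R^{-1}}$, which follows because $\op{W(x)\MC W_Q^{-1}}\lesssim_n\sup_{|e|=1}|W(x)e|\big/\dashint_Q|W(z)e|\,dz$ while, for $Q\subseteq R$, the inner average is at least the essential infimum of $|W(\cdot)e|$ over $R$, hence (by $A_1$) at least $[W]_{\A_1}^{-1}\dashint_R|W(z)e|\,dz$.

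The final step, which I expect to be the crux, is a stopping-time argument. Split $\{\Phi f>1\}\subseteq\bigcup_m A_m$ by the dyadic size of the matrix factor, $A_m=\{x:\exists\,Q\in\MC D,\ Q\ni x,\ \op{W(x)\MC W_Q^{-1}}\in[2^{m-1},2^m),\ \dashint_Q|f|\,dy>2^{-m}\}$, where $m$ runs from about $-\log_2[W]_{\A_1}$ upward, since $\op{W(x)\MC W_Q^{-1}}\gtrsim_n[W]_{\A_1}^{-1}$. For fixed $m$ take the maximal dyadic cubes $\{Q_j^m\}$ with $\dashint_{Q_j^m}|f|\,dy>2^{-m}$, so $\sum_j|Q_j^m|\le 2^m\norm f_1$; every cube witnessing membership in $A_m$ lies inside one of them, whence $A_m\subseteq\bigcup_j\{x\in Q_j^m:N_{Q_j^m}(x)\ge 2^{m-1}\}$. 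Applying Chebyshev to the matrix reverse H\"older inequality through the localized bound gives $|A_m|\lesssim_n([W]_{\A_1})^s\,2^{m(1-s)}\norm f_1$, and summing the geometric series in $m$ produces a factor $\tfrac1{s-1}\approx\Aonesc W$ (times a power of $[W]_{\A_1}$), which after combining with the pointwise estimate yields \eqref{eqn:max}. The summation is the heart of the matter: $\Aonesc W$ enters \emph{only} here, through the reverse H\"older exponent $s$ that makes the series converge --- it would diverge with the trivial exponent $s=1$, so some quantitative $A_\infty$ input is unavoidable --- while the reverse H\"older inequality is precisely what confines the ``large matrix factor'' set inside the Calder\'on--Zygmund cubes $Q_j^m$. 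This route gives \eqref{eqn:max} with a power of $[W]_{\A_1}$; obtaining the stated linear dependence would require a more economical choice of cubes in place of the crude localization $N_R\lesssim[W]_{\A_1}\op{W\MC W_R^{-1}}$.
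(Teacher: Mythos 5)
Your reductions are sound as far as they go: the scalar facts $[|W(\cdot)e|]_{A_1}\le[W]_{\A_1}$ and $[|W(\cdot)e|]_{A_\infty}\le\Aonesc{W}$, the uniform reverse H\"older exponent $s=1+c/\Aonesc{W}$, the matrix reverse H\"older bound $\dashint_Q\op{W(x)\W_Q^{-1}}^s\,dx\lesssim_n1$, and the pointwise estimate $M_Wf(x)\lesssim[W]_{\A_1}\sup_{Q\ni x}\op{W(x)\W_Q^{-1}}\dashint_Q|f|\,dy$ are all correct and are essentially the same ingredients the paper uses. The genuine gap is quantitative, and you concede it in your last sentence: the theorem asserts the constant $[W]_{\A_1}\Aonesc{W}$, and your argument does not deliver it. The pointwise reduction already spends one factor of $[W]_{\A_1}$; your weak $(1,1)$ bound for the remaining operator $\Phi$ then spends a second one, because confining the large matrix factor inside the Calder\'on--Zygmund cubes via $N_R(x)\lesssim_n[W]_{\A_1}\op{W(x)\W_R^{-1}}$ forces Chebyshev with the reverse H\"older inequality to be applied at height $2^m/[W]_{\A_1}$ rather than $2^m$. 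Even optimizing your sum (trivial bound $|A_m|\lesssim2^m\|f\|_{L^1}$ when $2^m\le[W]_{\A_1}$, Chebyshev above that threshold) gives $|\{\Phi f>1\}|\lesssim[W]_{\A_1}\Aonesc{W}\|f\|_{L^1}$ and hence a final constant of order $[W]_{\A_1}^2\Aonesc{W}$; summing only the Chebyshev bound from $m\approx-\log_2[W]_{\A_1}$, as written, is worse still. Since the content of the statement is precisely the quantitative dependence, this loss is a real gap, not a cosmetic one.

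The paper avoids the second factor by never forming the full maximal operator $\Phi$. Lemma~\ref{MaxSparse} first gives a sparse domination $M_Wf\lesssim\sum_j\MC{T}_{\MC{S}_j,W}f$ by a stopping-time argument run on the averages $\langle|\W_QW^{-1}f|\rangle_Q$, after which $[W]_{\A_1}$ is used exactly once to pass to the scalar sparse operator $\MC{T}h=\sum_{Q\in\MC{S}}\op{W(x)\W_Q^{-1}}\langle h\rangle_Q\chi_Q$. The weak $(1,1)$ bound for $\MC{T}$ then costs only $\Aonesc{W}$: a Calder\'on--Zygmund decomposition of $|f|$ at height $2$ makes $\MC{T}b\equiv0$ by cancellation (cubes of $\MC{S}$ meeting $\supp b$ either lie inside $\Omega$, whose measure is already under control, or contain the CZ cubes, over which $b$ integrates to zero), and the good part is handled through an $L^r$ bound for $\MC{T}$ with $r'=2q'$, proved by duality, where the sparseness ($|Q|\lesssim|E_Q|$, disjointness of the $E_Q$) plays the role your localization step plays and the only large constant is $\|M^\D\|_{L^r\to L^r}\le r'\approx\Aonesc{W}$. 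To rescue your route you would need to replace the crude bound $N_R\lesssim[W]_{\A_1}\op{W(x)\W_R^{-1}}$ by a stopping time taken with respect to the matrix-weighted averages themselves, as in Lemma~\ref{MaxSparse}; as it stands, your proof establishes \eqref{eqn:max} only with $[W]_{\A_1}\Aonesc{W}$ replaced by (at best) $[W]_{\A_1}^2\Aonesc{W}$.
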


\begin{theorem} \label{thm:MainCZO}
  Define $T_W$ by \eqref{eqn:TW} with $p=1$.   Given $W\in \A_1$ then for all ${f}\in L^1(\R^d,\Cn)$ and
  $\lambda>0,$
  \begin{equation}\label{eqn:CZO}
    |\{x\in \R^d: |T_W{f}(x)|>\lambda\}|
    \lesssim \frac{[W]_{\A_1} \Aonesc{W}}{\lambda} \int_{\R^d}
    |{f}(x)|\,dx.
  \end{equation}
\end{theorem}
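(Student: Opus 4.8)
The plan is to reduce the weak $(1,1)$ bound for $T_W$ to a corresponding bound for the scalar maximal operator via a Calder\'on--Zygmund decomposition, using the key structural fact that $\op{W(\cdot)}$ and $\op{W^{-1}(\cdot)}$ interact well with $\A_1$. Fix $f \in L^1(\R^d,\Cn)$ and $\lambda > 0$. First I would perform a Calder\'on--Zygmund decomposition of the scalar function $x \mapsto |f(x)|$ at height $\lambda$, obtaining a disjoint family of cubes $\{Q_j\}$ with $\sum_j |Q_j| \lesssim \lambda^{-1} \|f\|_{L^1}$, and write $f = g + b$ where $g$ is the good part (bounded, with $\|g\|_\infty \lesssim \lambda$ and $\|g\|_{L^1} \lesssim \|f\|_{L^1}$) and $b = \sum_j b_j$ is the bad part, each $b_j$ supported on $Q_j$ with mean zero and $\|b_j\|_{L^1(Q_j)} \lesssim \lambda |Q_j|$. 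The subtlety here is that $g$ and $b$ should be \emph{vector-valued}; one splits each component of $f$, or equivalently takes $b_j = (f - \avgint_{Q_j} f)\mathbf 1_{Q_j}$ so that $b_j$ is vector-valued with vanishing vector average. Then $|\{|T_W f| > \lambda\}| \le |\{|T_W g| > \lambda/2\}| + |\{|T_W b| > \lambda/2\}|$.

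For the good part I would use the strong $(2,2)$ (or $(p,p)$ for a convenient $p$) bound for $T_W$. Since $W \in \A_1 \subset \A_2$ with $[W]_{\A_2} \lesssim [W]_{\A_1}$, the Christ--Goldberg theorem gives $T_W : L^2(\R^d,\Cn) \to L^2(\R^d,\Cn)$. By Chebyshev,
\[
  |\{|T_W g| > \lambda/2\}| \lesssim \frac{1}{\lambda^2} \|T_W g\|_{L^2}^2 \lesssim \frac{1}{\lambda^2} \|g\|_{L^2}^2 \lesssim \frac{1}{\lambda^2} \|g\|_\infty \|g\|_{L^1} \lesssim \frac{1}{\lambda}\|f\|_{L^1},
\]
with the implicit constant depending on the relevant quantitative $\A_2$ norm of $W$, hence on $[W]_{\A_1}\Aonesc{W}$. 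I would need to track that the Christ--Goldberg $L^2$ bound is quantitative in $[W]_{\A_2}$; this is available in the literature (e.g.\ \cite{NPTV,CIM}), and for the endpoint it suffices to have \emph{some} polynomial dependence since we are not claiming sharpness in the power.

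For the bad part, discard the exceptional set $\Omega^* = \bigcup_j 2Q_j$, which has measure $\lesssim \lambda^{-1}\|f\|_{L^1}$, and estimate $|\{x \notin \Omega^* : |T_W b(x)| > \lambda/2\}|$ by Chebyshev in $L^1$ of the complement:
\[
  \int_{(\Omega^*)^c} |T_W b(x)|\,dx \le \sum_j \int_{(2Q_j)^c} |W(x) T(W^{-1} b_j)(x)|\,dx.
\]
Here I would use the Calder\'on--Zygmund kernel smoothness together with the cancellation $\int_{Q_j} b_j = 0$: writing $K(x,y)$ for the (matrix, i.e.\ scalar times identity) kernel of $T$, the integrand is $\op{W(x)}\,|\int_{Q_j}(K(x,y)-K(x,c_j))W^{-1}(y)b_j(y)\,dy|$, which by the Hörmander condition is bounded by $\op{W(x)}\int_{Q_j}\frac{|y-c_j|^\delta}{|x-c_j|^{d+\delta}}\op{W^{-1}(y)}|b_j(y)|\,dy$. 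The main obstacle — and the place where matrix $\A_1$ really enters — is controlling $\int_{(2Q_j)^c}\op{W(x)}|x-c_j|^{-d-\delta}\,dx$ against something like $|Q_j|^{-1-\delta/d}\op{W(x_0)}$ for a well-chosen reference, and pairing it with the factor $\op{W^{-1}(y)}|b_j(y)|$. The clean way is: by the $\A_1$ condition \eqref{eq:A1}, for a.e.\ $y$ and every cube $Q\ni y$ one has $\dashint_Q \op{W(z)}\,dz \lesssim [W]_{\A_1}\,|W(y)|_{\mathrm{op}}^{\text{(acting on the right eigenvector)}}$, which after the standard dyadic annular decomposition of $(2Q_j)^c$ and summing the geometric series in $\delta$ yields $\int_{(2Q_j)^c}\op{W(x)}|x-c_j|^{-d-\delta}\,dx \lesssim [W]_{\A_1}\,|Q_j|^{-\delta/d-1}\cdot(\text{value of }\op{W}\text{ controlled at }y)$; here one must be careful that $\A_1$ compares $W(z)$ to $W(y)$ for the \emph{fixed} $y$ in the integral over $Q_j$, so the pairing $\op{W(x)W^{-1}(y)}$ collapses. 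Combining, $\int_{(2Q_j)^c}|T_W b_j|\,dx \lesssim [W]_{\A_1}\int_{Q_j}|b_j(y)|\,dy \lesssim [W]_{\A_1}\,\lambda|Q_j|$; summing over $j$ and using $\sum_j|Q_j|\lesssim \lambda^{-1}\|f\|_{L^1}$ finishes the bad part. The appearance of $\Aonesc{W}$ rather than only $[W]_{\A_1}$ should come from the good part (the quantitative $\A_2 \to $ $L^2$ bound is naturally phrased with the scalar $A_\infty$ constant), while the bad part contributes only $[W]_{\A_1}$; collecting the two gives the claimed constant $[W]_{\A_1}\Aonesc{W}$.

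I expect the main technical obstacle to be making the "pointwise $\A_1$" estimate rigorous in the matrix setting — i.e.\ replacing the naive bound $\op{W(x)W^{-1}(y)} \le \op{W(x)}\op{W^{-1}(y)}$ (which loses too much) by the genuine use of \eqref{eq:A1}, which directly controls averages of $\op{W(x)W^{-1}(y)}$ over cubes containing $y$. This is exactly the content of the scalar reduction lemmas one proves for $\A_1$ matrix weights (cf.\ \cite{CRM}), and the argument will likely be organized by first establishing such a lemma and then running the Calder\'on--Zygmund argument above with it. A secondary point is ensuring the good-part bound is quantitative; if a quantitative Christ--Goldberg $L^2$ estimate in terms of $[W]_{\A_2}$ and $\Aonesc{W}$ is not directly citable, one can instead obtain it from the sparse domination of $T_W$ together with the known $L^2(W)$ bound for sparse operators, which is quantitative.
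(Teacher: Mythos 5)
There is a genuine gap, and it sits exactly at the point the paper's introduction flags as delicate. Your good-part estimate assumes that $T_W$ with the $p=1$ normalization, i.e. $f\mapsto W\,T(W^{-1}f)$, is bounded on $L^2(\R^d,\Cn)$ because $W\in\A_1\subset\A_2$. But the Christ--Goldberg theorem for $W\in\A_2$ gives boundedness of $f\mapsto W^{1/2}T(W^{-1/2}f)$, not of $W\,T(W^{-1}\cdot)$; the latter is equivalent to $T:L^2(W^2)\to L^2(W^2)$, which requires (in the scalar case) $w^2\in A_2$, and this fails for general $w\in A_1$ (e.g. $w(x)=|x|^{-\alpha}$ with $d/2<\alpha<d$). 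So there is no $L^p$ bound, for any $p>1$, to run Chebyshev against, and no interpolation-type argument is available --- this is precisely why the Muckenhoupt--Wheeden-type endpoint \eqref{eqn:MW1} is much harder than the usual weak $(1,1)$ estimate. The paper instead dominates $T_W f$ by a sparse expression $[W]_{\A_1}\sum_Q \op{W(x)\W_Q^{-1}}\langle|f|\rangle_Q\chi_Q(x)$, where the reducing matrix normalizes the coefficient so that $\avgint_Q\op{W(x)\W_Q^{-1}}\,dx\approx 1$; the factor $\Aonesc{W}$ then enters through the sharp reverse H\"older inequality for $|W(\cdot)\W_Q^{-1}e_i|$ with exponent $q=1+c/\Aonesc{W}$, which yields an $L^r$ bound, $r$ close to $1$, for the resulting scalar operator, and this is what controls the good part.

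Your bad-part argument has a second, independent problem: the cancellation you invoke is on the wrong function. The kernel regularity is applied to $T(W^{-1}b_j)$, so you need $\int_{Q_j}W^{-1}(y)b_j(y)\,dy=0$, whereas your $b_j=(f-\avgint_{Q_j}f)\chi_{Q_j}$ only satisfies $\int_{Q_j}b_j=0$; without the correct cancellation the annular sums diverge. If you repair this by re-centering, $b_j=(f-W\avgint_{Q_j}W^{-1}f)\chi_{Q_j}$, the corresponding good part becomes $W(x)\avgint_{Q_j}W^{-1}f$ on $Q_j$, which is no longer pointwise bounded by $\lambda$ (only its averages are controlled via $\A_1$), so the good-part strategy collapses even before the missing $L^2$ bound. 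The paper sidesteps both issues at once: it performs the Calder\'on--Zygmund decomposition on the \emph{scalar} function $|f|$, and because the dominating sparse operator only involves scalar averages $\langle\cdot\rangle_Q$ of the input, the bad part is annihilated identically outside $\Omega$, with all matrix information confined to the coefficients $\op{W(x)\W_Q^{-1}}$. Your bad-part computation with the $\A_1$ condition on dyadic annuli is sound in spirit, but as organized the proposal does not yield the theorem.
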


It follows from the definition that $\Aonesc{W}\lesssim [W]_{\A_1}$,
so  in both of these results we can estimate the constant by
$[W]_{\A_1}^2$.  While we are able to give a quantitative estimate in
terms of the $\A_1$ constant, we
do not believe that either result is sharp.  In the scalar case, for
the Hardy-Littlewood maximal operator it is well known (though not
explicitly in the literature)  that the
sharp constant is $[w]_{A_1}$.  For CZOs the sharp constant is $[w]_{A_1}(1+\log([w]_{A_1}))$:  see Lerner, Ombrosi
and P\'erez { \cite{LeOP}} for the upper bound {and \cite{LNO} for the lower bound.}

\begin{remark}
  When $1<p<\infty$, direct estimates for the best constant in the
  weak $(p,p)$ inequalities for these
  operators are not known.  From the strong $(p,p)$ inequalities, we
  have that an upper bound on the constant for $M_W$ is
  $[W]_{\A_p}^{\frac{p'}{p}}$ (see~\cite{IM}) and for $T_W$ is
  $[W]_{\A_p}^{1+\frac{1}{p-1}-\frac{1}{p}}$ (see~\cite{CIM}).  It is
  an open question whether our techniques can be used to prove better
  weak type estimates.
\end{remark}

\begin{remark}
  As we noted above, even in the scalar case the $\A_1$ condition is not necessary for $M_W$
  to satisfy the weak $(1,1)$ inequality.  However, $\A_1$ weights are
  characterized by a weak $(1,1)$ inequality for a closely related,
  ``auxiliary'' maximal operator $M_W^\prime$, that was introduced by
  Christ and Goldberg~\cites{CG,G} and which plays an important role
  in studying $M_W$.  See~\cite[Theorem~1.21]{CIM} where this is
  proved in a more general context.
\end{remark}

\medskip

Finally, we can also use our techniques to prove a quantitative,
weak-type estimate for commutators of CZOs.  Let $T$ be a CZO and let
$b\in BMO$.  Define the commutator $[T, b]f(x) =T(bf) (x) - b (x) Tf
(x)$, and define  the matrix weighted commutator $C_{b, W}(T)f  = W [T,
b] W^{-1}f $.  Even in the scalar case  commutators are
more singular and do not satisfy weak $(1,1)$ bounds.  Rather, the natural endpoint condition
involves an $L\log L$ estimate:  see P\'erez and
Pradolini~\cites{perez95b,MR1827073}.    Let $\Phi(t) = t \log(e +
t)$; then we have the following result.

\begin{theorem} \label{thm:MainComm}
  Given $W\in \A_1$, then for all ${f}\in L^1(\R^d,\Cn)$ and
  $\lambda>0$,
\begin{multline*}
  |\{x \in \Rd : |C_{b, W} (T) f (x)| > \lambda\}|  \\
  \lesssim \|b\|_{BMO} [W]_{\A_1} \max\{\log ([W]_{\A_1} + e ),  \Aonesc{W} \}^2
\int_{\R^d} \Phi \pr{\frac{|f(x)|}{\lambda}} \,dx.
\end{multline*}
\end{theorem}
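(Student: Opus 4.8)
\medskip
\noindent To prove Theorem~\ref{thm:MainComm} we would combine a sparse domination for the matrix-weighted commutator with a Calder\'on--Zygmund argument carried out at the level of the $L\log L$ maximal operator. Since $C_{b,W}(T)$ is linear in $b$ and in $f$, we may normalize $\norm{b}_{BMO}=1$ and $\lambda=1$; recall $\Phi(t)=t\log(e+t)$. Because a CZO acts componentwise on $\Cn$-valued functions, the first step is to invoke the commutator form of the convex-body sparse domination of Nazarov--Petermichl--Treil--Volberg. Conjugating by $W$ and using, for every scalar $\tau$ with $\abs{\tau}\le 1$, the elementary bound $\op{W(x)\avgint_Q \tau(y)W^{-1}(y)f(y)\,dy}\le \avgint_Q \op{W(x)W^{-1}(y)}\,\abs{f(y)}\,dy$, we reduce to proving the claimed weak-type $L\log L$ estimate for finitely many model operators of the two types
\[
  \sum_{Q\in\mathcal S}\abs{b(x)-b_Q}\avgint_Q \op{W(x)W^{-1}(y)}\,\abs{f(y)}\,dy\;\mathbf 1_Q(x),\qquad \sum_{Q\in\mathcal S}\avgint_Q \abs{b(y)-b_Q}\op{W(x)W^{-1}(y)}\,\abs{f(y)}\,dy\;\mathbf 1_Q(x),
\]
with $\mathcal S$ sparse. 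The point of the convex-body version, rather than componentwise scalar domination, is that $W(x)$ stays paired with $W^{-1}(y)$ inside the same cube average, so the matrix $\A_1$ condition and the reducing-matrix machinery used for Theorems~\ref{thm:MainMax} and~\ref{thm:MainCZO} remain available.

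Next, fix a model operator and perform a Calder\'on--Zygmund decomposition of $f$ at height $1$ with respect to the Orlicz maximal operator $M_\Phi$: there are disjoint cubes $\{Q_j\}$ with $\sum_j\abs{Q_j}\lesssim\int_{\Rd}\Phi(\abs f)$ and a splitting $f=g+\sum_j h_j$ with $\abs g\lesssim 1$ a.e., $\int h_j=0$, $\supp h_j\subset Q_j$, and $\avgint_{Q_j}\Phi(\abs{h_j})\lesssim 1$. With $\Omega=\bigcup_j 3Q_j$ we have $\abs\Omega\lesssim\int\Phi(\abs f)$, so it remains to control the level sets of the model operator applied to $g$, and of the model operator applied to $h=\sum_j h_j$ on $\Rd\setminus\Omega$. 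For the good part I would pass to a strong-type weighted estimate: choose $s>1$ small enough that $W^s\in\A_s$ --- possible by the sharp reverse H\"older inequality for matrix $\A_1$ weights, with exponent controlled by $\Aonesc{W}$ --- and use
\[
  \norm{C_{b,W}(T)g}_{L^s(\Rd)}=\norm{[T,b](W^{-1}g)}_{L^s(W^s)}\le\norm{[T,b]}_{L^s(W^s)\to L^s(W^s)}\norm{g}_{L^s(\Rd)}
\]
together with $\abs g\lesssim 1$ (so $\norm g_{L^s}^s\le\int\abs g\lesssim\int\abs f\le\int\Phi(\abs f)$), the known quantitative bound for matrix-weighted commutators, and $[W^s]_{\A_s}\lesssim[W]_{\A_1}$. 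Chebyshev's inequality then yields a contribution of the claimed form; the square reflects the two appearances of the weight (one for the operator, one for the commutator structure), the logarithm comes from the sharp John--Nirenberg inequality, and $\Aonesc{W}$ enters through the reverse-H\"older exponent.

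For the bad part, fix $x\notin\Omega$ and a cube $Q\in\mathcal S$ contributing to the model sum, i.e.\ $x\in Q$ and $Q$ meets $\supp h_j$ for some $j$. A cube $Q$ that is small compared with $Q_j$ and meets $Q_j$ lies inside $\Omega$ and so cannot contain $x$; hence every contributing $Q$ satisfies $\ell(Q)\gtrsim\ell(Q_j)$, so $\avgint_Q\abs{h_j}\lesssim(\abs{Q_j}/\abs Q)\avgint_{Q_j}\abs{h_j}$, and one gains cancellation from $\int h_j=0$ through the oscillation that the sparse cubes inherit from the kernel smoothness of $T$. Summing in $j$ then requires a matrix-weighted H\"ormander/Fefferman--Stein inequality --- a consequence of $W\in\A_1$ --- to control the matrix averages $\avgint_Q\op{W(x)W^{-1}(y)}$, while the factors $\abs{b(x)-b_Q}$ and $\abs{b(y)-b_Q}$ are absorbed using the exponential integrability of $BMO$ functions. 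It is this last step, via $L\exp$--$L\log L$ duality, that converts the $L^1$ input into $\int\Phi(\abs f)$ and supplies the final logarithmic factor.

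The main obstacle is the bad part: establishing the matrix-weighted H\"ormander-type inequality that makes the off-diagonal sum converge in the presence of the $BMO$ factors, and then doing the bookkeeping so that the final constant is exactly $\norm{b}_{BMO}[W]_{\A_1}\max\{\log([W]_{\A_1}+e),\Aonesc{W}\}^2$ and not a cruder power of $[W]_{\A_1}$. Even the good part, though structurally routine, needs $s$ to be chosen in careful coordination with the reverse-H\"older exponent so as not to degrade the exponent of $[W]_{\A_1}$. A possible alternative to the whole scheme is a Coifman--Rochberg--Weiss conjugation argument, writing $C_{b,W}(T)$ through $W e^{zb}T(e^{-zb}W^{-1}\cdot)$ and a Cauchy integral over a small circle $\abs{z}\sim\norm{b}_{BMO}^{-1}$ and invoking Theorem~\ref{thm:MainCZO} for the perturbed weights $e^{tb}W$ with $\abs{t}\lesssim\norm{b}_{BMO}^{-1}$; but controlling the $\A_1$ constants of these perturbations is itself delicate.
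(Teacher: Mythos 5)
Your first step---reducing to the two sparse model forms via the convex--body sparse domination of commutators---is exactly the paper's starting point (Theorem~\ref{Thm:ConvexBodyComm}). But from there your plan rests on a Calder\'on--Zygmund decomposition of $f$ adapted to $M_\Phi$, and this is precisely the route the paper rules out: the cancellation $\int h_j=0$ is useless against the model operators, because in the term with $|b(y)-b_Q|$ the kernel in $y$ is no longer constant on $Q$, and after sparse domination the kernels $k_Q\in B_{L^\infty(Q\times Q)}$ carry \emph{no smoothness whatsoever}, so there is no H\"ormander-type oscillation estimate to run on the bad part. The ``matrix-weighted H\"ormander/Fefferman--Stein inequality'' you invoke there is not established in the paper or in the literature you can point to, and you cannot recover kernel regularity once you have passed to the sparse forms; if instead you kept $T$ itself to exploit smoothness, you would still face the term $T\bigl((b-b_{Q_j})h_j\bigr)$, which has no cancellation and is the genuinely hard term even in the scalar P\'erez argument. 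Your good-part estimate also fails quantitatively: bounding $\|[T,b]\|_{L^s(W^s)\to L^s(W^s)}$ with $s-1\simeq 1/\Aonesc{W}$ brings in the known commutator constant, which blows up like a power $[W^s]_{\A_s}^{c/(s-1)}$, i.e.\ exponentially in $\Aonesc{W}\log[W]_{\A_1}$---far worse than the claimed $[W]_{\A_1}\max\{\log([W]_{\A_1}+e),\Aonesc{W}\}^2$. The paper never uses a weighted norm inequality for $T$ or $[b,T]$ at all; the only inputs are the reverse H\"older inequality for $x\mapsto\op{W(x)\W_Q^{-1}}$ (exponent $s=1+c/\Aonesc{W}$, entering linearly through $s'$ via John--Nirenberg) and the $\exp L$--$L\log L$ H\"older inequality.

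What the paper actually does, in place of your good/bad splitting, is a ``slicing'' argument in the spirit of~\cite{CR}: for each model operator one sets $G=\{\MC{T}f>1\}\setminus\{M(|f|)>1\}$ (respectively $\setminus\{M_\Phi(|f|)>1\}$ for the term with $|b(y)-b_Q|$), writes $|G|\le\int_G\MC{T}f$, applies reverse H\"older, John--Nirenberg and the Orlicz H\"older inequality to each cube, and then stratifies the sparse family into classes $\mathcal S_{k,j}$ according to $\langle|f|\rangle_Q\sim 2^{-j}$ (or $\||f|\|_{L\log L,Q}\sim 2^{-j}$) and $\langle\chi_G\rangle_Q^{1/(2s')}\sim 2^{-k}$. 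Each block $s_{k,j}$ is bounded in two ways---once using sparseness to sum over $E_Q$'s, once using the level set $\{Mg>2^{-cs'k}\}$---and summing the minimum of the two bounds, with the cut in $j$ placed at $\log_2([W]_{\A_1}\Aonesc{W}\gamma)+k(2s'-1)+2s'+k$, produces the factor $\max\{\log([W]_{\A_1}+e),\Aonesc{W}\}^2$ and allows the $\frac12|G|$ term to be absorbed. To salvage your write-up you would need to abandon the CZ decomposition and the strong-type conjugation step and carry out this stratification instead; the final bound then follows by combining the two lemmas with the weak-type $L\log L$ bound for $M_\Phi$.
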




The remainder of this paper is organized as follows.  In
Section~\ref{section:prelim} we provide some preliminary results about
matrix weights and about domination via sparse operators.   In
Section~\ref{section:proofs-main} we prove Theorems~\ref{thm:MainMax}
and~\ref{thm:MainCZO}.  Finally, in Section~\ref{section:proofs-comm}
we prove Theorem~\ref{thm:MainComm}.

Throughout, $d$ will denote the dimension of the underlying space
$\R^d$, and $n$ will be dimension of space $\Cn$ in which vector-valued
functions take their range.  All matrices will be $n\times n$
matrices.   If we write $A\lesssim B$, then there exists a constant
$c$ such that $A\leq cB$.  By $A\equiv B$ we mean $A\lesssim B$ and
$B\lesssim A$. The implicit constants might depend on $n$, $d$,
or the given CZO, {  but will not depend on the matrix weight $W$.}

\section{Preliminaries}
\label{section:prelim}

In this section we gather a few additional facts about matrix weights
and also give the results on sparse domination which are central to
our proofs.

First, as we noted above, if $W\in \A_1$, then
we have that $\op{W(\cdot)}$ is a scalar $A_1$
weight~\cite[Lemma~4.4]{CRM}.   Moreover, we in fact have that for any
$e\in \Cn$, $|W(\cdot)e| \in A_1$, and
\begin{equation} \label{AonePropA}
 [W]_{\A_1} = \sup_{e\in \Cn} [|W(\cdot)e|]_{A_1}.
\end{equation}

Central to estimating matrix weighted operators is the concept of a
reducing matrix.  These were first introduced in~\cite{CG,G} when $p>1$, and when $p = 1$
in~\cite{FR}.  Given a norm $\rho$ on $\Cn$, it is well known (see
\cite[Lemma 11.4]{NT} for a self contained and simple proof) that
there exists a positive definite $n \times n$ matrix $A$ such that for any $e \in \Cn$ we have
$|Ae| \approx \rho(e)$. We refer to this matrix as a reducing
matrix of $\rho$.  (Note that the matrix $A$ is not unique, but
this is not important in practice.)  In particular, given a matrix weight $W$ and
any measurable $Q \subseteq \Rd$ with $0 < |Q| <\infty$, we have that
$e \mapsto \dashint_Q | W(y) e| \, dy$ is a norm on $\Cn$.  Hereafter
we will denote by $\W_Q$  any reducing matrix for this norm, so that
$$|\W_Q e| \approx \dashint_Q | W(y) e| \, dy.$$

We can also define the matrix $\A_1$ condition in terms of reducing
matrices.     Given a cube $Q$  and $x\in Q$, we have that if
$\{e_j\}_j$ is the standard basis for $\Cn$, then
\begin{multline*}
\op{\mathcal{W}_QW^{-1}(x)}\approx
\sum_{j=1}^n|\mathcal{W}_QW^{-1}(x)e_j|\\ \approx
\sum_{j=1}^n\dashint_Q|W(y)W^{-1}(x)e_j|\,dy\approx\dashint_Q
\op{W(y)W^{-1}(x)} \,dy.
\end{multline*}
Hence,
$$[W]_{A_1}\approx \sup_Q \sup_{x\in Q} \op{\mathcal{W}_QW^{-1}(x)}.$$

\bigskip

To prove our results, we will show that we can reduce each weak-type
estimate to proving an analogous result for a so-called sparse
operator.  To define these operators first we recall the machinery of general dyadic grids as
defined in~\cite{LN}; we refer the reader there for complete
details. We will need the fact that
every cube in $\R^d$ can be approximated by a dyadic cube from one of
finitely many dyadic grids (see the corollary of \cite[Theorem
3.1]{LN}).

\begin{lemma} \label{lem:3grid}
  There exist dyadic grids $\D^1,\ldots,\D^{3^d}$ such that given any cube
  $Q$ there exists $1\leq \alpha \leq 3^d$ and $Q^\alpha\in \D^\alpha$
  such that $Q\subset Q^\alpha$ and $\ell(Q^\alpha)\leq 6\ell(Q)$.
\end{lemma}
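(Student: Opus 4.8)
The plan is to obtain this statement as an immediate corollary of the three-lattice (or rather $3^d$-lattice) theorem from \cite{LN}, which asserts the existence of dyadic grids $\D^1,\dots,\D^{3^d}$ with the property that every ball (equivalently, every cube) is contained in some $Q^\alpha \in \D^\alpha$ of comparable side length. Since we only need the version for cubes, I would first recall the construction: fix the standard dyadic grid $\D^0$ on $\R^d$, and for each $\alpha = (\alpha_1,\dots,\alpha_d)$ with $\alpha_i \in \{0,1,2\}$ define the translated grid $\D^\alpha$ consisting of cubes of the form $2^{-k}\big(m + [0,1)^d + (-1)^k \tfrac{\alpha}{3}\big)$ for $k \in \Z$, $m \in \Z^d$. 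This gives $3^d$ grids, indexed $1$ through $3^d$ after relabeling.

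Next I would verify the covering property. Given an arbitrary cube $Q$ with side length $\ell(Q)$, choose $k \in \Z$ so that $2^{-k-1} < \ell(Q) \le 2^{-k}$ (say; the precise normalization only affects the constant $6$). A dyadic cube at scale $k$ has side length $2^{-k} \ge \ell(Q)$, but $Q$ need not be contained in a single such cube because it may straddle a dyadic hyperplane. The point of the shifts $(-1)^k \alpha/3$ is exactly that, for any cube $Q$ of the appropriate size, at least one of the $3^d$ shifted grids places the dyadic lattice so that no hyperplane of the relevant scale passes through $Q$; a short pigeonhole argument in each coordinate direction — the shifts $0, \pm 1/3$ of a unit interval cannot all be hit by an interval of length $\le 1/3 \cdot$(something) — produces an $\alpha$ and a cube $Q^\alpha \in \D^\alpha$ with $Q \subset Q^\alpha$ and $\ell(Q^\alpha) = 2^{-k} \le 2\ell(Q) \le 6\ell(Q)$. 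One then reads off $\alpha$ and $Q^\alpha$; relabeling the grids by $1,\dots,3^d$ gives the statement as written.

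The only mild subtlety — the part that requires care rather than difficulty — is the pigeonhole step showing that some shift avoids having a dyadic boundary hyperplane cut through $Q$; everything else is bookkeeping with scales. Since \cite[Theorem~3.1]{LN} and its corollary already package precisely this conclusion (indeed in the broader generality of balls), the cleanest exposition is simply to cite that corollary and note that restricting to cubes and tracking the side-length bound yields $\ell(Q^\alpha) \le 6 \ell(Q)$. I do not expect any genuine obstacle here: the lemma is a standard and well-documented fact, included only to fix notation for the sparse domination arguments that follow.
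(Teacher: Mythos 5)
Your proposal is correct and takes essentially the same route as the paper: the paper offers no proof of this lemma, simply citing the corollary of \cite[Theorem~3.1]{LN}, which is exactly your fallback, and your sketched one-third-trick construction with the shifted grids $2^{-k}\bigl(m+[0,1)^d+(-1)^k\alpha/3\bigr)$ is the standard argument behind that citation. The only quibble is that your scale choice $2^{-k-1}<\ell(Q)\le 2^{-k}$ is too generous for the pigeonhole step (a side of length comparable to $2^{-k}$ can meet all three shifted lattices at that scale; one needs $\ell(Q)$ at most about a third of the dyadic scale, which is where the constant $6$ comes from), but as you yourself note this only adjusts the normalization, not the argument.
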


Given $\eta\in(0,1)$ we say that $\mathcal{S}\subset\D$
is a $\eta$-sparse family if for every $Q\in\mathcal{S}$ there exists
a measurable subset $E_{Q}\subset Q$ such that
\begin{enumerate}
\item $\eta|Q|\leq|E_{Q}|.$
\item The sets $E_{Q}$ are pairwise disjoint.
\end{enumerate}
Further, given $\Lambda > 1$ we say that $\mathcal{S} \subset \D$
is a $\Lambda$ Carleson family if for every $Q \in \MC{S}$,
$$\sum_{P \in \MC{S}, P \subseteq Q} |P| \leq \Lambda |Q|.$$
Clearly every $\eta$-sparse family is $\eta^{-1}$ Carleson,
since
$$\sum_{P \in \MC{S}, P \subseteq Q} |P| \leq \eta^{-1} \sum_{P \in
  \MC{S}, P \subseteq Q} |E_P| \leq \Lambda^{-1} |Q|.$$
Though less obvious, the converse is true:. every $\Lambda$ Carleson
family is $\Lambda^{-1}$ sparse~\cite[Lemma 6.3]{LN}.  We will also
use without further comment the fact that every $\Lambda$ Carleson
family can be written as a union of $m$ Carleson families, each of
which is $1 + \frac{\Lambda-1}{m}$ Carleson~\cite[Lemma 6.6]{LN}.
Hereafter we will sometimes refer to a family as sparse or Carleson
without reference to $\eta$ or $\Lambda$  if the
specific values of these constants are unimportant.

To estimate CZOs applied to vector-valued functions, we will use the
convex body domination that was introduced by F.~Nazarov,
S.~Petermichl and A.~Volberg~ \cite{NPTV}.  Given
a cube $Q$ and a function $f\in L^{1}(Q,\Cn)$, define
\[
\llangle f\rrangle_{Q}=\left\{ \dashint_{Q}f\varphi
  dx\,: \varphi\in
  B_{L^{\infty}}(Q)\right\},
\]
where $B_{L^{\infty}(Q)}=\left\{ \phi\in L^{\infty}(Q,\R)\,:\,\|\phi\|_{L^{\infty}}\leq1\right\} $.
They proved that $\llangle f\rrangle_Q$
is a symmetric, convex and compact set in $\mathbb{C}^{n}$.

The following result was first proved in~\cite{NPTV};  we give it here
in the version found in
\cite[Corollary~2.3.18]{HNotes}.   To state it, recall that given a linear operator $T$, the grand-maximal operator
$M_{T}$, defined by A.~Lerner~\cite{Le}, is
\[
M_{T}f(x)=\sup_{Q\ni x}\sup_{y\in Q}|T(f\chi_{\mathbb{R}^{n}\setminus3Q})(y)|.
\]

\begin{theorem}\label{Thm:ConvexBody}
  Let $T:L^{1}(\mathbb{R}^{d})\rightarrow L^{1,\infty}(\mathbb{R}^{d})$
be a linear operator such that  $M_{T}:L^{1}(\mathbb{R}^{d})\rightarrow L^{1,\infty}(\mathbb{R}^{d})$.
For {  $f\in L_{c}^{1}(\mathbb{R}^{d};\mathbb{C}^{n})$} and
$\varepsilon\in(0,1)$,
there exist  $3^d$,  $3^{-d}(1-\varepsilon)$-sparse collections of
dyadic cubes (drawn from the dyadic grids in Lemma~\ref{lem:3grid})
such that
\[
Tf(x)\in\frac{c_{d,n}c_{T}}{\varepsilon}\sum_{j=1}^{3^{d}}\sum_{Q\in\mathcal{S}_{j}}\llangle f\rrangle_{Q}\chi_{Q}(x)
\]
where $c_{T}=\|T\|_{L^{1}\rightarrow L^{1,\infty}}+\|M_{T}\|_{L^{1}\rightarrow L^{1,\infty}}$.
 In particular, there exist functions $k_{Q}\in B_{L^{\infty}(Q\times Q)}$
such that
\[
Tf(x)=\frac{c_{d,n}c_{T}}{\varepsilon}\sum_{j=1}^{3^{d}}{ \sum_{Q\in\mathcal{S}_{j}}}\left(\dashint_{Q} k_{Q}(x,y)f(y)dy\right)\chi_{Q}(x).
\]
\end{theorem}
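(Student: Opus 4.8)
The plan is to adapt to the convex-body setting the scalar sparse-domination scheme of Lerner for a general linear operator $T$ with $T$ and the grand maximal operator $M_T$ both of weak type $(1,1)$, following the convex-body approach introduced in \cite{NPTV}. First I would reduce to a local, single-grid statement: fix $f\in L^1_c(\R^d;\Cn)$ and a dyadic grid $\D$, and show by a Calder\'on--Zygmund stopping time (described below) that for every $Q_0\in\D$ there is a sparse family $\mathcal F(Q_0)\subseteq\D$ of subcubes of $Q_0$ with
\[
T(f\chi_{3Q_0})(x)\in c\,c_T\sum_{Q\in\mathcal F(Q_0)}\llangle f\rrangle_{3Q}\,\chi_Q(x)\qquad\text{for a.e. }x\in Q_0.
\]
Letting $Q_0$ increase to $\R^d$ (so that eventually $\supp f\subseteq 3Q_0$ and $T(f\chi_{3Q_0})=Tf$) yields the same inclusion for $Tf$ with a sparse $\mathcal F\subseteq\D$. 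Finally, by Lemma~\ref{lem:3grid} each cube $3Q$, $Q\in\mathcal F$, sits inside a dyadic cube $\widetilde{3Q}$ from one of the grids $\D^1,\dots,\D^{3^d}$ with $\ell(\widetilde{3Q})\le 18\,\ell(Q)$; extending competitors by zero gives $\llangle f\rrangle_{3Q}\subseteq 6^d\,\llangle f\rrangle_{\widetilde{3Q}}$ while $\chi_Q\le\chi_{\widetilde{3Q}}$. Sorting the cubes $\widetilde{3Q}$ by grid and invoking the Carleson/sparse equivalence of \cite{LN} (together with the splitting of a Carleson family into finitely many pieces of small Carleson constant) reorganizes everything into the asserted $3^d$ families $\mathcal S_1,\dots,\mathcal S_{3^d}$, $\mathcal S_j\subseteq\D^j$, each $3^{-d}(1-\varepsilon)$-sparse; the factor $\varepsilon^{-1}$ in the constant and the $\varepsilon$ in the sparseness come from taking the stopping level below to be $\simeq c_T/\varepsilon$.

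For the local statement, fix $Q_0$ and put $\Omega=C(n,d)\,c_T/\varepsilon$ with $C$ large. Let $\{P_j\}$ be the maximal dyadic subcubes of $Q_0$ that are contained in
\[
\bigl\{x:\ M(|f|\chi_{3Q_0})(x)>\Omega\,\dashint_{3Q_0}|f|\bigr\}\ \cup\ \bigl\{x:\ \widetilde M_T(f\chi_{3Q_0})(x)>\Omega\,\dashint_{3Q_0}|f|\bigr\},
\]
where $M$ is the Hardy--Littlewood maximal operator and $\widetilde M_T$ is $M_T$ restricted to scales $\le\ell(Q_0)$, interpreted via the Euclidean norm on $\Cn$; the weak $(1,1)$ bound for $\widetilde M_T$ on $L^1(\R^d;\Cn)$ follows from the scalar hypothesis via $|T(g)(z)|\le\sum_k|T(g_k)(z)|$, at the cost of a dimensional constant. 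Hence $\sum_j|P_j|\le\tfrac12|Q_0|$, so $E_{Q_0}:=Q_0\setminus\bigcup_jP_j$ satisfies $|E_{Q_0}|\ge\tfrac12|Q_0|$, which is what produces sparseness. On $E_{Q_0}$ both stopping conditions fail at all scales $\le\ell(Q_0)$, and since $|T(f\chi_{3Q_0})(x)|\le\widetilde M_T(f\chi_{3Q_0})(x)$ for a.e.\ $x\in Q_0$ --- obtained by letting $Q'\downarrow x$ in the bound $|T(f\chi_{3Q_0})(x)|\le\sup_{z\in Q'}|T(f\chi_{3Q_0\setminus3Q'})(z)|+|T(f\chi_{3Q'})(x)|$, a standard consequence of the weak $(1,1)$ bound for $T$ --- we get $|T(f\chi_{3Q_0})(x)|\le\Omega\dashint_{3Q_0}|f|$ on $E_{Q_0}$. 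On each $P_j$ one splits $f\chi_{3Q_0}=f\chi_{3Q_0\setminus3P_j}+f\chi_{3P_j}$, controls the first term by $\Omega\dashint_{3Q_0}|f|$ through the definition of $\widetilde M_T$, and passes to the recursion for $T(f\chi_{3P_j})$ on $P_j$. Summing over the generations of the stopping tree gives the \emph{scalar} pointwise bound $|T(f\chi_{3Q_0})(x)|\le c\,c_T\sum_{Q\in\mathcal F(Q_0)}\bigl(\dashint_{3Q}|f|\bigr)\chi_Q(x)$, with $\mathcal F(Q_0)$ the collection of all stopping cubes.

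The step I expect to be the main obstacle is the upgrade of this scalar bound to the convex-body inclusion: the single family $\mathcal F(Q_0)$ must dominate $\langle v,Tf\rangle=T(\langle v,f\rangle)$ simultaneously for \emph{every} $v\in\Cn$, which is precisely what turns the scalar triangle inequalities just used into Minkowski-sum inclusions of the (generally far thinner) bodies $\llangle f\rrangle_{3Q}$. Accordingly one must run the stopping construction with the two sets above replaced by $\{x:\ \exists\ \text{dyadic }Q\ni x\text{ with }\llangle f\rrangle_{3Q}\not\subseteq\Omega\llangle f\rrangle_{3Q_0}\}$ and $\{x:\ \exists\ \text{dyadic }Q\ni x\text{ and }z\in Q\text{ with }T(f\chi_{3Q_0\setminus3Q})(z)\notin\Omega\llangle f\rrangle_{3Q_0}\}$. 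Since $\sup_{|v|=1}\dashint_Q|\langle v,f\rangle|\approx\dashint_Q|f|$ only up to a factor depending on $n$, the first of these sets is genuinely larger than its scalar counterpart $\{M(|f|\chi_{3Q_0})>\Omega\dashint_{3Q_0}|f|\}$, and the crux of the proof is to show that the associated set-valued maximal operators still yield a stopping region $\bigcup_jP_j$ of measure at most a small fraction of $|Q_0|$; this uniform, direction-free estimate is what replaces the scalar weak $(1,1)$ bounds used above. Granting it, the stopping iteration and the $3^d$-grid reorganization proceed as described, giving $Tf(x)\in\frac{c_{d,n}c_T}{\varepsilon}\sum_{j=1}^{3^d}\sum_{Q\in\mathcal S_j}\llangle f\rrangle_Q\chi_Q(x)$; the representation with kernels $k_Q\in B_{L^\infty(Q\times Q)}$ then follows by a measurable selection, choosing for each $x$ and each sparse cube $Q\ni x$ a function $k_Q(x,\cdot)$ with $\|k_Q(x,\cdot)\|_\infty\le1$ that realizes, inside $\llangle f\rrangle_Q=\{\dashint_Q f\varphi:\|\varphi\|_\infty\le1\}$, the vector contributed by $Q$.
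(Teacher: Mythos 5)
You should note first that the paper itself gives no proof of Theorem~\ref{Thm:ConvexBody}: it is quoted from \cite{NPTV} and \cite[Corollary~2.3.18]{HNotes}, so your proposal must be measured against that argument, whose general scheme (a local Calder\'on--Zygmund stopping time driven by $M$ and $M_T$ at level $\simeq c_T/\varepsilon$, iteration over the stopping tree, reorganization into the $3^d$ grids of Lemma~\ref{lem:3grid} via the Carleson/sparse equivalence, and a selection argument producing the kernels $k_Q$) you do reproduce correctly, including a correct diagnosis of where the difficulty lies. The problem is that the step you explicitly ``grant'' is precisely the convex-body content of the theorem, so as written the proof has a genuine gap. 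Your modified stopping sets ask that $T(f\chi_{3Q_0\setminus 3Q})(z)\in\Omega\llangle f\rrangle_{3Q_0}$ and $\llangle f\rrangle_{3Q}\subseteq\Omega\llangle f\rrangle_{3Q_0}$, i.e.\ that $|\langle T(f\chi_{3Q_0\setminus 3Q})(z),e\rangle|\le\Omega\dashint_{3Q_0}|\langle f,e\rangle|$ and $\dashint_{3Q}|\langle f,e\rangle|\le\Omega\dashint_{3Q_0}|\langle f,e\rangle|$ \emph{simultaneously for every} $e\in\Cn$ (these averages are the support function of the body), but you give no mechanism to bound the measure of the resulting bad region: a union bound over directions is impossible (there are uncountably many, and the per-direction thresholds $\dashint_{3Q_0}|\langle f,e\rangle|$ may be far smaller than $\dashint_{3Q_0}|f|$), which is exactly why the scalar weak $(1,1)$ hypotheses do not apply as stated.

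The missing idea, which is the heart of the argument in \cite{NPTV}, is to replace ``all directions'' by $n$ directions using a reducing matrix (equivalently, the John ellipsoid) of the symmetric convex body $\llangle f\rrangle_{3Q_0}$: choosing an ellipsoid comparable to the body with orthonormal axes $u_1,\dots,u_n$, membership of a vector in $c_n\Omega\llangle f\rrangle_{3Q_0}$, and likewise the inclusion $\llangle f\rrangle_{3Q}\subseteq c_n\Omega\llangle f\rrangle_{3Q_0}$, follow (at the cost of a dimensional constant) from the corresponding \emph{scalar} inequalities in the $n$ directions $u_i$ only. Each of those is a condition on the scalar functions $\langle f,u_i\rangle\chi_{3Q_0}$, to which the weak $(1,1)$ bounds for $T$, $M_T$ and the Hardy--Littlewood maximal operator do apply, with threshold comparable to the support function $\dashint_{3Q_0}|\langle f,u_i\rangle|$ in that direction; a union bound over $i=1,\dots,n$ with $\Omega\simeq n\,c_{d,n}c_T/\varepsilon$ then gives $\sum_j|P_j|\le(1-\eta)|Q_0|$ and the recursion closes, and the same device disposes of the extra term $\simeq c_T|f(x)|$ that appears when you let $Q'\downarrow x$ (it, too, must be placed in the body direction by direction via Lebesgue differentiation of $\langle f,u_i\rangle$). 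Until you supply this finite-direction reduction, the key measure estimate for the stopping region, and hence the theorem, remains unproved; with it, the rest of your outline (iteration, three-grid bookkeeping with the factor $6^d$, and the duality/selection step giving $k_Q\in B_{L^\infty(Q\times Q)}$) is sound.
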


Lerner proved that for Calder\'{o}n-Zygmund operators,
\[
\|M_{T}\|_{L^{1}\rightarrow L^{1,\infty}}\leq
c_{d}\left(\|T\|_{L^{2}\rightarrow
    L^{2}}+c_{K}+\|\omega\|_{\text{Dini}}\right).
\]
See~\cite{Le} for the precise definitions of the Dini condition and the Dini ``norm" of the kernel $K$ of a CZO $T$.
It is also known that
\[
\|T\|_{L^{1}\rightarrow L^{1,\infty}}\leq c_{d}\left(\|T\|_{L^{2}\rightarrow L^{2}}+\|\omega\|_{\text{Dini}}\right).
\]
Consequently Theorem \ref{Thm:ConvexBody}
holds for a Calder\'{o}n-Zygmund operator $T$ with
\[
c_{T}=\|T\|_{L^{2}\rightarrow L^{2}}+c_{K}+\|\omega\|_{\text{Dini}}.
\]

\medskip

The sparse convex body domination can also be extended to commutators.
In fact, somewhat surprisingly, we  can use a ``$2 \times 2$ block matrix trick"
inspired by \cite{GPTV} in conjunction with
Theorem~\ref{Thm:ConvexBody} to obtain the corresponding result
for commutators. In particular, the following was very recently proved
in \cite{IPR}, extending \cite[Theorem 1.1]{LORR} (and in fact
providing a very short proof of \cite[Theorem 1.1]{LORR}.)  For
completeness we include the relatively short proof.

\begin{theorem}
\label{Thm:ConvexBodyComm}Let $T:L^{1}(\mathbb{R}^{d})\rightarrow L^{1,\infty}(\mathbb{R}^{d})$
be a linear operator such that  $M_{T}:L^{1}(\mathbb{R}^{d})\rightarrow L^{1,\infty}(\mathbb{R}^{d})$.
For  {  $f\in L_{c}^{1}(\mathbb{R}^{d};\mathbb{C}^{n})$ and every $b\in L_{loc}^{1}(\Rd)$ where $bf \in L^1(\Rd, \Cn)$},
and $\varepsilon\in(0,1)$ there exist there exist  $3^d$,  $3^{-d}(1-\varepsilon)$-sparse collections of
dyadic cubes (drawn from the dyadic grids in Lemma~\ref{lem:3grid}) such that
\[
  [b,T]f(x)
  \in\frac{c_{d,n}c_{T}}{\varepsilon}\sum_{j=1}^{3^{d}}\sum_{Q\in\mathcal{S}_{j}}\left[(b(x)-b_{Q})\llangle
    f\rrangle_{Q}\chi_{Q}(x)
    + \llangle(b-b_{Q})f\rrangle_{Q}\chi_{Q}(x)\right]
\]
where each $C_{S}$ is a constant and $c_{T}=\|T\|_{L^{1}\rightarrow L^{1,\infty}}+\|M_{T}\|_{L^{1}\rightarrow L^{1,\infty}}$.
In particular, there exist functions $k_{Q} \in B_{L^{\infty}(Q\times Q)}$
such that

\begin{align}
  [b,T]f(x)
  &
    =\frac{c_{d,n}c_{T}}{\varepsilon}\sum_{j=1}^{3^{d}}\sum_{Q\in\mathcal{S}_{j}}(b(x)-b_{Q})
    \left(\dashint_{Q}k_{Q}(x,y)f(y)dy\right)\chi_{Q}(x) \label{EqCommOne}\\
  & \quad {  - }\frac{c_{d,n}c_{T}}{\varepsilon}\sum_{j=1}^{3^{d}}
    \sum_{Q\in\mathcal{S}_{j}}\left(\dashint_{Q}k_{Q}(x,y)(b(y)-b_{Q})f(y)dy\right)\chi_{Q}(x)
    \label{EqCommTwo}.
\end{align}

\begin{proof} Let $f \in L^\infty_c(\Rd,\Cn)$. By Theorem \ref{Thm:ConvexBody} there exists sparse collections of cubes $\{\MC{S}_j\}_{j=1}^{3^d}$ and  $k_Q(x, y)$ with $\|k_Q\|_{L^\infty(\Rd \times \Rd)} = 1$ where \begin{equation} Tf(x) = c_{d, n} c_T \sum_{j = 1}^{3^d} \sum_{Q \in \MC{S}_j} \pr{\dashint_Q k_{Q} (x, y) f(y) \, dy } \chi_Q(x). \label{Sparse} \end{equation}

  Define the $\Ctwon$ valued function $\tilde{f}$ by $$\tilde{f}(x) = \begin{pmatrix} f(x) \\ f(x) \end{pmatrix}$$ and define the $2n \times 2n$ block matrix $\Phi (x) $ by $$\Phi(x) = \begin{pmatrix}1_{n \times n} &{  b(x)} \otimes 1_{n \times n}   \\ 0 & 1_{n \times n}  \end{pmatrix} $$ so that $$\Phi^{-1} (x) = \begin{pmatrix}1_{n \times n} & - {  b(x)} \otimes 1_{n \times n}   \\ 0 & 1_{n \times n}  \end{pmatrix} $$

{  Direct}  computation shows $$\Phi(x) (T \Phi^{-1} \tilde{f}) (x) = \begin{pmatrix} Tf(x) - [T, b] f(x) \\ Tf (x) \end{pmatrix} $$  {  and}  $$\Phi^{-1} (y) \tilde{f}(y) = \begin{pmatrix}1_{n \times n} & - b(y) \otimes 1_{n \times n}   \\ 0 & 1_{n \times n}  \end{pmatrix} \begin{pmatrix} f(y) \\ f(y) \end{pmatrix} = \begin{pmatrix} f(y) - f(y) b(y)  \\ f(y) \end{pmatrix} $$
{  Since $\Phi^{-1} \tilde{f} \in L_c ^1$ , plugging $\Phi^{-1} \tilde{f}$ into \eqref{Sparse} gives}
 \begin{align*} \Phi(x) (T \Phi^{-1} \tilde{f}) (x) &  =  c_{d, n} c_T \sum_{j = 1}^{3^d} \sum_{Q \in \MC{S}_j} \Phi(x) \begin{pmatrix}  \langle  k_{Q} (x, \cdot) (f - fb) \rangle_Q \\ \langle k_{Q} (x, \cdot)  f \rangle_Q    \end{pmatrix} \chi_Q(x)
 \\ & = c_{d, n} c_T \sum_{j = 1}^{3^d} \sum_{Q \in \MC{S}_j}  \begin{pmatrix}  \langle  k_{Q} (x, \cdot) (f - fb) \rangle_Q + b(x)  \langle k_{Q} (x, \cdot)  f \rangle_Q \\ \langle k_{Q} (x, \cdot)  f \rangle_Q    \end{pmatrix} \chi_Q(x). \end{align*}

  However, adding and subtracting $\langle k_Q(x, \cdot) f \rangle_Q   \langle b \rangle_Q $  to the first component, we get
   \begin{align*} & \Phi(x)  (T \Phi^{-1} \tilde{f}) (x)
   \\ & =
c_{d, n} c_T \sum_{j = 1}^{3^d} \sum_{Q \in \MC{S}_j}  \begin{pmatrix}  \langle  k_{Q} (x, \cdot) (f - fb) \rangle_Q + b(x)  \langle k_{Q} (x, \cdot)  f \rangle_Q \\ \langle k_{Q} (x, \cdot)  f \rangle_Q    \end{pmatrix} \chi_Q(x)
   \\ & = c_{d, n} c_T \sum_{j = 1}^{3^d} \sum_{Q \in \MC{S}_j}  \begin{pmatrix}  \langle   k_{Q} (x, \cdot) f \rangle_Q + \langle k_{Q} (x, \cdot) f (\langle b\rangle_Q - b) \rangle_Q + (b(x) - \langle b\rangle_Q ) \langle k_{Q} (x, \cdot)  f \rangle_Q \\ \langle k_{Q} (x, \cdot)  f \rangle_Q    \end{pmatrix} \chi_Q(x).
   \end{align*}

   Thus, \begin{align*} [T, b] f(x) &  =   c_{d, n} c_T \sum_{j = 1}^{3^d} \sum_{Q \in \MC{S}_j}     \dashint_Q  k_{Q} (x, y)  (b(y) - \langle b\rangle_Q) f(y) \, dy
   \\ &  - (b(x) - \langle b\rangle_Q ) \pr{\dashint_Q f(y) k_{Q} (x, y)  f (y) \, dy. }\end{align*}

\end{proof}

\end{theorem}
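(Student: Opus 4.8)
The plan is to prove this convex body sparse domination for commutators by the $2\times 2$ block matrix device already sketched, so what remains is to pass from the $L^\infty_c$ case in the displayed computation to general $f\in L^1_c$ and to extract the convex-body and kernel formulations cleanly. First I would fix $f\in L^\infty_c(\R^d,\C^n)$ and $b\in L^1_{\loc}$, form $\tilde f$ and the block matrices $\Phi,\Phi^{-1}$, and apply Theorem~\ref{Thm:ConvexBody} to the scalar CZO $T$ acting componentwise on $\Phi^{-1}\tilde f\in L^1_c(\R^d;\C^{2n})$ (note that $b$ bounded on $\supp f$ makes $bf\in L^\infty_c$, so this membership is genuine). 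The key algebraic identity is $\Phi(x)(T\Phi^{-1}\tilde f)(x)=\big(Tf(x)-[T,b]f(x),\ Tf(x)\big)^{\mathsf T}$, which isolates $[T,b]f$ in the first coordinate; reading off that coordinate from the sparse expansion of $T(\Phi^{-1}\tilde f)$, and then adding and subtracting $\langle b\rangle_Q\langle k_Q(x,\cdot)f\rangle_Q$ inside the average exactly as in the computation above, produces the two families of terms $(b(x)-b_Q)\langle f\rangle_Q\chi_Q$ and $\langle(b-b_Q)f\rangle_Q\chi_Q$.

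Next I would record that, since $\|k_Q\|_{L^\infty(\R^d\times\R^d)}\le 1$, for fixed $x$ the set $\{\dashint_Q k_Q(x,y)g(y)\,dy\}$ is contained in a constant multiple (depending only on $n$, from the equivalence of the convex body defined via $\R$-valued test functions and via the coordinatewise formulation) of $\llangle g\rrangle_Q$; applying this with $g=f$ and with $g=(b-b_Q)f$ upgrades the pointwise identity to the claimed membership
\[
[b,T]f(x)\in\frac{c_{d,n}c_T}{\varepsilon}\sum_{j=1}^{3^d}\sum_{Q\in\mathcal S_j}\big[(b(x)-b_Q)\llangle f\rrangle_Q\chi_Q(x)+\llangle(b-b_Q)f\rrangle_Q\chi_Q(x)\big],
\]
with the same $3^d$ sparse families furnished by Theorem~\ref{Thm:ConvexBody}. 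The kernel formulation \eqref{EqCommOne}--\eqref{EqCommTwo} is then literally the identity obtained by reading off the first coordinate, with the $k_Q$ inherited from Theorem~\ref{Thm:ConvexBody} (after the harmless symmetrization that makes them elements of $B_{L^\infty(Q\times Q)}$ rather than of $B_{L^\infty(\R^d\times\R^d)}$, by restriction).

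Finally I would remove the boundedness assumption on $f$: given $f\in L^1_c$ with $bf\in L^1$, take $f_k=f\chi_{\{|b|\le k\}}$, so $f_k\in L^1_c$, $bf_k\in L^\infty_c\subset L^1_c$, and $f_k\to f$, $bf_k\to bf$ in $L^1$; since $T$ and $M_T$ are weak $(1,1)$, after passing to a subsequence $[b,T]f_k\to[b,T]f$ a.e., and the averages $\langle f_k\rangle_Q$, $\langle(b-b_Q)f_k\rangle_Q$ converge to their counterparts for each of the (finitely many relevant) cubes $Q$; compactness and convexity of the convex bodies lets one pass to the limit in the membership statement. The main obstacle is this limiting argument, specifically making sure that the sparse families can be taken independent of $k$ (they need not be—one argues instead that the limit lies in \emph{some} such sum, using that only finitely many grids are involved and a diagonal/compactness extraction over the countable collection of dyadic cubes), and checking that the a.e.\ convergence of $[b,T]f_k$ is compatible with the cube-by-cube convergence of the right-hand side; everything else is the block-matrix bookkeeping already displayed.
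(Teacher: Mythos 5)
Your core argument is exactly the paper's: apply the convex body sparse domination of Theorem~\ref{Thm:ConvexBody} once to the $\C^{2n}$-valued function $\Phi^{-1}\tilde f=(f-bf,\,f)^{\mathsf T}$, use the algebraic identity $\Phi(x)\,(T\Phi^{-1}\tilde f)(x)=\bigl(Tf(x)-[T,b]f(x),\,Tf(x)\bigr)^{\mathsf T}$ to read the commutator off the first block, and then add and subtract $\langle b\rangle_Q\langle k_Q(x,\cdot)f\rangle_Q$ to produce the two families of terms; the passage from the kernel form to the convex-body membership (using $\|k_Q\|_{L^\infty}\le 1$, at the cost of a dimensional constant absorbed into $c_{d,n}$) is also fine and matches the intended reading of the statement.

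Where you diverge is the final limiting step, and this is both the weakest and an unnecessary part of your write-up. The hypothesis of the theorem is precisely $f\in L^1_c(\Rd;\Cn)$ with $bf\in L^1(\Rd;\Cn)$, and this is exactly what makes $\Phi^{-1}\tilde f=(f-bf,\,f)^{\mathsf T}$ an element of $L^1_c(\Rd;\C^{2n})$; Theorem~\ref{Thm:ConvexBody} only requires $L^1_c$ data, so it applies \emph{directly} to $\Phi^{-1}\tilde f$ for every such $f$, with a single choice of sparse families and kernels serving both \eqref{EqCommOne} and \eqref{EqCommTwo}. There is therefore no need to first restrict to $f\in L^\infty_c$ and then approximate by $f_k=f\chi_{\{|b|\le k\}}$. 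Your parenthetical justification for the bounded case is also off: $b\in L^1_{\loc}$ need not be bounded on $\supp f$, and nothing of the sort is needed --- for $f\in L^\infty_c$ one only needs $bf\in L^1_c$, which follows from $b\in L^1_{\loc}$. Had the limiting argument actually been required, your sketch of it would be the genuine gap: the sparse families and kernels change with $k$, the membership statement involves an infinite sum of convex sets, and the ``diagonal/compactness extraction over the countable collection of dyadic cubes'' is only gestured at, not carried out. As it stands, simply delete the third paragraph, invoke Theorem~\ref{Thm:ConvexBody} for $\Phi^{-1}\tilde f\in L^1_c(\Rd;\C^{2n})$ under the stated hypotheses, and your proof coincides with the paper's.
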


\section{Proofs of Theorems \ref{thm:MainMax} and \ref{thm:MainCZO}}
\label{section:proofs-main}

We will first need to prove a sparse domination of the maximal function which is more useful for us than the linearization in \cite{CG,G}. Note that a similar stopping time argument was used in \cite{IPR} to prove the sharp bound $$\|M_W\|_{L^p \rightarrow L^p} \lesssim [W]_{\A_1} ^\frac{1}{p} $$ when $1 < p < \infty.$ Given a dyadic grid $\D$, define \begin{equation*} M_{ W} ^\D {f}(x) = \sup_{\substack{Q \ni x  \\ Q \in \D}} \dashint_Q |W(x) W^{-1} (y) {f}(y)| \, dy \end{equation*} and for a sparse collection $\MC{S}$ of dyadic cubes, let \begin{equation*}\MC{T}_{\MC{S}, W} {f} (x) = \sum_{Q \in \MC{S}}  \op{W(x)\MC{W}_Q ^{-1} }\innp{|\MC{W}_Q W^{-1} {f}|}_Q \chi_Q(x) \end{equation*}

\begin{lemma}\label{MaxSparse}Let $W$ be any matrix weight and $f \in L^1(\Rd;\Cn)$ have compact support.  Then there exists $3^d$ sparse collections $\MC{S}_j$ where \begin{equation*} M_{ W} {f}(x) \lesssim \sum_{j = 1}^{3^d} \MC{T}_{\MC{S}_j, W} {f} (x). \end{equation*} \end{lemma}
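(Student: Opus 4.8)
The plan is to prove the pointwise sparse bound for $M_W$ by a standard Calderón–Zygmund stopping-time argument adapted to the matrix-weighted maximal operator, working one dyadic grid at a time and then summing. By Lemma~\ref{lem:3grid} every cube $Q$ is contained in a dyadic cube $Q^\alpha$ from one of $3^d$ grids with $\ell(Q^\alpha)\le 6\ell(Q)$, and since $|W(x)W^{-1}(y)\vf(y)|$ averaged over $Q$ is controlled up to a dimensional constant by the average over $Q^\alpha$ (because $|Q^\alpha|\le 6^d|Q|$), we have $M_W\vf(x)\lesssim\sum_{\alpha=1}^{3^d}M_W^{\D^\alpha}\vf(x)$. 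So it suffices to prove, for each fixed dyadic grid $\D$, that there is a sparse family $\MC S\subset\D$ with $M_W^{\D}\vf(x)\lesssim\MC T_{\MC S,W}\vf(x)$.

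Fix the grid $\D$. I would first fix a large cube $Q_0$ containing $\supp\vf$ (and then let $Q_0$ exhaust $\R^d$ through a sequence of dyadic cubes, the usual exhaustion; since $\vf$ has compact support the sum stabilizes). Set up the Calderón–Zygmund stopping family inside $Q_0$: let $\MC S$ consist of $Q_0$ together with, recursively for each $Q\in\MC S$, the maximal dyadic subcubes $Q'\subsetneq Q$ for which
\[
\dashint_{Q'}\abs{\MC W_Q W^{-1}(y)\vf(y)}\,dy > A\,\dashint_{Q}\abs{\MC W_Q W^{-1}(y)\vf(y)}\,dy
\]
for a suitably large dimensional constant $A$. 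Here $\MC W_Q$ is a reducing matrix for the norm $e\mapsto\dashint_Q|W(y)e|\,dy$. By the maximality and the usual weak-type bound for the dyadic scalar maximal function applied to the scalar function $y\mapsto\abs{\MC W_Q W^{-1}(y)\vf(y)}$, the union of the stopping children of $Q$ has measure at most $\tfrac1A\,2^d|Q|\le\tfrac12|Q|$ if $A$ is chosen large enough; hence $E_Q=Q\setminus\bigcup\{Q':Q'\text{ stopping child of }Q\}$ satisfies $|E_Q|\ge\tfrac12|Q|$, and the $E_Q$ are pairwise disjoint, so $\MC S$ is $\tfrac12$-sparse.

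The heart of the argument is the pointwise estimate. Fix $x$ and fix $Q\ni x$, $Q\in\D$, $Q\subseteq Q_0$; let $\hat Q\in\MC S$ be the smallest stopping cube containing $Q$ (so $Q$ lies between $\hat Q$ and its stopping children). By the stopping condition (no stopping between $\hat Q$ and $Q$, so $Q$ has not been ``selected''), the average
\[
\dashint_{Q}\abs{\MC W_{\hat Q}\,W^{-1}(y)\vf(y)}\,dy
\le A\,\dashint_{\hat Q}\abs{\MC W_{\hat Q}\,W^{-1}(y)\vf(y)}\,dy
= A\,\innp{\abs{\MC W_{\hat Q}W^{-1}\vf}}_{\hat Q}.
\]
Now I insert $\MC W_Q^{-1}\MC W_Q$ to replace $\MC W_{\hat Q}$ by $\MC W_Q$: writing $W(x)W^{-1}(y)\vf(y)=\big(W(x)\MC W_Q^{-1}\big)\big(\MC W_Q\MC W_{\hat Q}^{-1}\big)\big(\MC W_{\hat Q}W^{-1}(y)\vf(y)\big)$ and using the definition of reducing matrices, one gets
\[
\dashint_{Q}\abs{W(x)W^{-1}(y)\vf(y)}\,dy
\lesssim \op{W(x)\MC W_Q^{-1}}\;\op{\MC W_Q\MC W_{\hat Q}^{-1}}\;\innp{\abs{\MC W_{\hat Q}W^{-1}\vf}}_{\hat Q}.
\]
The factor $\op{\MC W_Q\MC W_{\hat Q}^{-1}}$ is $O(1)$ because, by the definition of $\MC W_Q$ and $\MC W_{\hat Q}$ together with $Q\subseteq\hat Q$ and the fact that $Q$ is non-stopping, $\abs{\MC W_Q e}\approx\dashint_Q|W e|\le \frac{|\hat Q|}{|Q|}\cdot(\text{something})$ — more precisely one uses $\dashint_Q|W(y)\MC W_{\hat Q}^{-1}e'|\,dy\lesssim A$ for unit $e'$, which is exactly the non-stopping inequality with $e'=\MC W_{\hat Q}e/|\MC W_{\hat Q}e|$ after noting $|W^{-1}(y)\vf(y)|$ should be replaced by the vector-free statement. (Here I would invoke the reducing-matrix comparison as in the displayed computation in the Preliminaries section, which gives $\op{\MC W_Q\MC W_{\hat Q}^{-1}}\lesssim A$.) Taking the supremum over all admissible $Q\ni x$ yields
\[
M_W^{\D}\vf(x)\lesssim \sup_{\hat Q\in\MC S,\ \hat Q\ni x}\op{W(x)\MC W_{\hat Q}^{-1}}\innp{\abs{\MC W_{\hat Q}W^{-1}\vf}}_{\hat Q}
\le \sum_{\hat Q\in\MC S}\op{W(x)\MC W_{\hat Q}^{-1}}\innp{\abs{\MC W_{\hat Q}W^{-1}\vf}}_{\hat Q}\chi_{\hat Q}(x),
\]
which is $\MC T_{\MC S,W}\vf(x)$. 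Summing over the $3^d$ grids finishes the proof.

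The main obstacle I anticipate is the bookkeeping in the reducing-matrix step: one must be careful that the stopping condition is phrased with the \emph{same} reducing matrix $\MC W_{\hat Q}$ that appears in the final sparse operator, and that the ``change of reducing matrix'' estimate $\op{\MC W_Q\MC W_{\hat Q}^{-1}}=O(1)$ genuinely follows from non-stopping — this is where the argument could go wrong if the stopping test is set up with the wrong matrix or the wrong normalization. Everything else (the $3^d$-grid reduction, the $Q_0$-exhaustion, the weak-$(1,1)$ estimate giving sparseness) is routine.
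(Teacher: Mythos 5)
Your overall strategy coincides with the paper's: reduce to a single dyadic grid via Lemma~\ref{lem:3grid}, localize using the compact support of $f$, build a principal-cube (stopping-time) family in which the children of a selected cube $\hat Q$ are the maximal subcubes where the average of $\abs{\W_{\hat Q}W^{-1}f}$ exceeds a fixed multiple of its average over $\hat Q$, get sparseness from maximality, and dominate every non-selected average by the term attached to its smallest selected ancestor. That skeleton is exactly the paper's proof (with $A=2$), and the reduction to the grids, the sparseness count, and the identification of the non-stopping inequality for $Q$ are all fine.

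The genuine problem is the step you flagged yourself: the insertion of $\W_Q^{-1}\W_Q$ together with the claim that $\op{\W_Q\W_{\hat Q}^{-1}}=O(1)$ follows from $Q$ being non-stopping. It does not. The non-stopping inequality controls averages of $\abs{\W_{\hat Q}W^{-1}(y)f(y)}$, a quantity built from $W^{-1}$ and the datum $f$, whereas $\op{\W_Q\W_{\hat Q}^{-1}}\approx\dashint_Q\op{W(y)\W_{\hat Q}^{-1}}\,dy$ is built from $W$ alone and is independent of $f$; there is no ``vector-free'' version of the stopping condition to appeal to. Since the lemma assumes nothing about $W$ beyond it being a matrix weight, this factor is genuinely unbounded: already in the scalar case $n=1$ it is comparable to $\langle w\rangle_Q/\langle w\rangle_{\hat Q}$, which can be made arbitrarily large by concentrating $w$ inside $Q$, while the stopping test, which involves $w^{-1}\abs{f}$, is unaffected, so such a $Q$ can perfectly well be non-stopping. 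Moreover, even if the claim were granted, your intermediate bound carries the factor $\op{W(x)\W_Q^{-1}}$, while your final display (and the sparse operator $\MC{T}_{\MC{S},W}$) requires $\op{W(x)\W_{\hat Q}^{-1}}$; passing from one to the other would need the reverse bound $\op{\W_{\hat Q}\W_Q^{-1}}\lesssim 1$, which is false for the same reason. Fortunately the detour is unnecessary: simply estimate $\abs{W(x)W^{-1}(y)f(y)}\le\op{W(x)\W_{\hat Q}^{-1}}\abs{\W_{\hat Q}W^{-1}(y)f(y)}$, average over $Q$, and apply the non-stopping inequality to get $\dashint_Q\abs{W(x)W^{-1}(y)f(y)}\,dy\le A\,\op{W(x)\W_{\hat Q}^{-1}}\innp{\abs{\W_{\hat Q}W^{-1}f}}_{\hat Q}$ directly; this is precisely how the paper argues, and with that replacement the rest of your proof goes through.
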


\begin{proof}    By Lemma \ref{lem:3grid}, it is enough to prove Lemma \ref{MaxSparse} for $M_W ^{\MC{D}} f $ where $\MC{D}$ is a fixed dyadic grid. Furthermore, since $f$ has compact support, we can replace $M_W ^{\MC{D}} f$ by $M_{W, J} ^{\MC{D}}$ for some $J \in \D$, where \begin{equation*} M_{ W, J} ^\D {f}(x) = { {\sup_{\substack{Q \ni x  \\ Q \in \D(J)}}}} \dashint_Q |W(x) W^{-1} (y) {f}(y)| \, dy. \end{equation*}  Let $\J(J)$ denote the maximal cubes $L \in \D(J)$ (if any exist) where \begin{equation*}\innp{|\MC{W}_J W^{-1} {f}|}_L > 2 \innp{|\MC{W}_J W^{-1} {f}|}_J. \end{equation*} By maximality we have \begin{align*} \sum_{L \in \J(J)} |L| & < \frac{1}{2 \innp{|\MC{W}_J W^{-1} {f}|}_J } \sum_{L \in \J(J)} \int_L |\MC{W}_J W^{-1}(y) {f}(y)| \, dy
\\ & \leq \frac{1}{2 \innp{|\MC{W}_J W^{-1} {f}|}_J } \int_J |\MC{W}_J W^{-1}(y) {f}(y)| \, dy
\\ & = \frac{ |J|}{2}. \end{align*}

Now let $\MC{F}(J)$ be the collection of cubes in $\D(J)$ that are not a subset of any cube $I \in \J(J)$.  We then have \begin{align*} \sup_{\substack{Q \ni x  \\ Q \in \D(J)}} &  \dashint_Q |W(x) W^{-1} (y) {f}(y)| \, dy
\\ & \leq \sup_{\substack{Q \ni x  \\ Q \in \F(J)}} \dashint_Q |W(x) W^{-1} (y) {f}(y) \chi_{\cup \J(J)} (y)| \, dy
\\ & \qquad + \sup_{\substack{Q \ni x  \\ Q \in \F(J)}} \dashint_Q |W(x) W^{-1} (y) {f}(y) \chi_{J \backslash \cup \J(J)}(y) | \, dy
 \\ & \qquad + \sum_{L \in \J(J)} \sup_{\substack{Q \ni x  \\ Q \in \D(L)}}\dashint_Q |W(x) W^{-1} (y) {f}(y) | \, dy
\\ & := A_1(x) + A_2(x) + \sum_{L \in \J(J)} \sup_{\substack{Q \ni x  \\ Q \in \D(L)}}\dashint_Q |W(x) W^{-1} (y) {f}(y) | \, dy. \end{align*}

We first estimate $A_1(x)$.  Let $x \in Q \in \F(J)$  and assume also $x \in I \in \J(J)$. Then by definition of $\F(J)$ we must have $I \subsetneq Q \subseteq J$ so that \begin{align*} A_1(x)
 & \leq \sup_{I \in \J(J)} \sup_{J \supseteq Q \varsupsetneq I \ni x} \dashint_Q |W(x) W^{-1} (y) {f}(y)| \, dy
\\ & \leq \op{W(x) \W_J ^{-1}} \sup_{I \in \J(J)}  \sup_{J \supseteq Q \varsupsetneq I } \dashint_Q |\W_J  W^{-1} (y) {f}(y)| \, dy
\\ & \leq 2 \op{W(x) \W_J ^{-1}} \innp{|\MC{W}_J W^{-1} {f}|}_J \chi_J(x)\end{align*} where the last inequality follows from maximality.

Next, to estimate $A_2(x)$, let $x \not \in  \cup_{L \in \J(J)} L$ and pick a sequence $L_k ^x$ of nested dyadic cubes where $$\{L_k ^x \} = \{L \in \F(J)  : x \in L\} = \{L \in \D(J) : x \in L\}.$$  But if $$\sup_k \innp{ |\MC{W}_J W^{-1} {f} | }_{L_k ^x} > 2 \innp{|\MC{W}_J W^{-1} {f}|}_J$$ then for some $k$ we have $$\innp{ |\MC{W}_J W^{-1} {f} | }_{L_k ^x} > 2\innp{|\MC{W}_J W^{-1} {f}|}_J$$ which means that $x \in L_k ^x \subseteq Q$ for some $Q \in \J(J)$.  Thus,
\begin{align*} A_2(x) & \leq \sup_{k} \dashint_{L_k ^x} |W(x) W^{-1} (y) {f}(y) | \, dy
\\ & \leq \op{W(x) \W_J ^{-1}} \sup_{k} \innp{ |\MC{W}_J W^{-1} {f}) |}_{L_k ^x}
\\ & \leq 2 \op{W(x) \W_J ^{-1}} \innp{|\MC{W}_J W^{-1} {f}|}_J \chi_J(x). \end{align*}  Iteration now completes the proof \end{proof}

\begin{proof}[Proof of Theorems~\ref{thm:MainMax} and~\ref{thm:MainCZO}]
 By Fatou's lemma for weak type estimates, we may assume that $f$ has compact support.    By Theorem \ref{Thm:ConvexBody} we have that there exists
\begin{align*}
|T_Wf(x)|&\lesssim \sum_{j=1}^{3^d}\sum_{Q\in \mathcal S_j} \dashint_Q|W(x)W^{-1}(y)f(y)|dy\cdot\chi_Q(x)\\
&\leq  \sum_{j=1}^{3^d}\sum_{Q\in \mathcal S_j} \op{W(x) \W_Q^{-1}}\dashint_Q\op{ \W_QW^{-1}(y)}|f(y)|dy\cdot\chi_Q(x)  \\
&\leq [W]_{\A_1} \sum_{j=1}^{3^d}\sum_{Q\in \mathcal S} \op{W(x) \W_Q^{-1}}\dashint_Q|f(y)|dy\cdot\chi_Q(x).
\end{align*}  Also note that Lemma \ref{MaxSparse} gives us the same estimate for $M_W f$.  Therefore, to prove Theorem \ref{thm:MainMax} and Theorem \ref{thm:MainCZO} it is enough to prove that
$$|\{x:|\MC{T} |f|(x)|>2\}|\lesssim \Aonesc{W} \|f\|_{L^1(\R^d;\Cn)}$$ where $\MC{T}$ is the \textit{scalar} linear operator defined by
$$\MC{T}h(x)=\sum_{Q\in \mathcal S} \op{W(x) \W_Q^{-1}}\dashint_Q h(y)dy\cdot\chi_Q(x)$$
and $\mathcal S$ is a sparse family of cubes contained in a dyadic grid $\D$.
 Let
$$\Omega=\{x:M^\D f(x)>2\}=\bigcup_j Q_j$$
where $\{Q_j\}$ are the maximal dyadic cubes that satisfy:
$$\dashint_{Q_j}|f(y)| \, dy > 2.$$
We use a Calder\'{o}n-Zygmund decomposition argument inspired by the arguments in \cite{CMP}.  Let $|f|=g+b$ where $b=\sum_j b_j$ and $b_j = (|f|-\dashint_{Q_j}|f|) \chi_{Q_j} $.  Then $g$ is a non-negative function that satisfies
$$g(x)\leq 2^{d+1} \qquad \int_{\R^d} g(x)\,dx\leq \int_{\R^d}|f(x)|\,dx$$
while each $b_j$ is supported on $Q_j$ and satisfies $\int_{Q_j}b_j(x)\,dx=0$.
Then we have
\begin{multline*}
|\{x:|\MC{T} |f|(x)|>2\}|\leq |\{x:\MC{T}g(x)>1\}|\\+|\{x\in \Omega: \MC{T}b(x)>1\}|+|\{x\notin \Omega: \MC{T}b(x)>1\}|.
\end{multline*}
Notice that the second term satisfies $|\Omega|\leq \|f\|_{L^1(\R^d;\Cn)}$.  Meanwhile, the third term is zero, since $\MC{T}b=0$.  Indeed, if $x\in Q$ and $Q \subseteq Q_j$ for some $j$ then obviously $x \in \Omega$. Thus,
$$\int_Q b(y)\,dy=\sum_{j:Q_j\subset Q}\int_{Q_j} b_j(y)\,dy=0,$$
so $\MC{T} b=0$.  Thus we are reduced to estimating $|\{x:\MC{T}g(x)>1\}|$.  By \eqref{AonePropA} and the sharp A${}_\infty$ reverse H\"{o}lder inequality (see \cite{HPR}), we can pick $c > 0$ independent of $W$ where if $q =  1 + \frac{c}{ \Aonesc{W}}$ then
\begin{align*}
\dashint_Q \op{W(x) \W_Q^{-1}}^q\,dx&\simeq \sum_{i=1}^n \dashint_Q { \abs{W(x) \W_Q^{-1}e_i}^q \,dx}
\\ & { \lesssim \sum_{i=1}^n \left(\dashint_Q \abs{W(x) \W_Q^{-1}e_i}\,dx\right)^q}  \simeq \sum_{i=1}^n |\mathcal W_Q\mathcal W_Q^{-1}e_i|^q\simeq 1.
\end{align*}
Let $r<q$ be such that $r'=2q'$. We will show that $\|\MC{T}\|_{L^r (\Rd) \rightarrow L^r (\Rd) } \lesssim \Aonesc{W}$ and thus
\begin{multline*}
|\{x:\MC{T} g(x)>1\}| \leq  \int_{\R^n}\MC{T}g(x)^r\,dx
  \lesssim  \Aonesc{W}^r  \int_{\R^d}g(x)^r\,dx  \lesssim \Aonesc{W} \int_{\R^d}|f(x)|\,dx
\end{multline*}
where in the last line we used that $\Aonesc{W}^r\leq \Aonesc{W}^{1+ \frac{c}{ \Aonesc{W}}}\lesssim \Aonesc{W}.$ To see that $\MC{T}$ is bounded on $L^r(\R^d)$ let $g$ and $h$ be non-negative scalar functions with $g\in L^r(\R^d)$ and $h\in L^{r'}(\R^d)$.  Then we use the well-known bound for the dyadic maximal function
$$\left(\inrd (M^\D g (x))^r \, dx\right)^{\frac1r} \leq r' \left( \inrd |g(x)| ^r \, dx\right)^{\frac1r}\\ \lesssim \Aonesc{W} \left( \inrd |g(x)| ^r \, dx\right)^{\frac1r}.$$
We have
\begin{align*}\lefteqn{\int_{\R^d}\MC{T}g(x)h(x)\,dx=\sum_{Q\in \mathcal S} \dashint_Q\op{W(x)\mathcal W_Q^{-1}}h(x)\,dx \dashint_Q g(y)\,dy \,|Q|}\\
&\leq \sum_{Q\in \mathcal S} \left(\dashint_Q\op{W(x)\mathcal W_Q^{-1}}^q\,dx\right)^{\frac1q}\left(\dashint_Q h(x)^{q'}\,dx\right)^{\frac{1}{q'}} \dashint_Q g(y)\,dy \,|Q|\\
&\lesssim \sum_{Q\in \mathcal S}\left(\dashint_Q h(x)^{q'}\,dx\right)^{\frac{1}{q'}} \dashint_Q g(y)\,dy \,|Q|\\
&\leq \left[\sum_{Q\in \mathcal S} \left(\dashint_Q h(x)^{q'}\,dx\right)^{\frac{r'}{q'}}|Q|\right]^{\frac1{r'}}\left[\sum_{Q\in \mathcal S} \left(\dashint_Q g(x)\,dx\right)^{r}|Q|\right]^{\frac1{r}} \\
&\lesssim \left[\sum_{Q\in \mathcal S} \left(\dashint_Q h(x)^{q'}\,dx\right)^{\frac{r'}{q'}}|E_Q|\right]^{\frac1{r'}}\left[\sum_{Q\in \mathcal S} \left(\dashint_Q g(x)\,dx\right)^{r}|E_Q|\right]^{\frac1{r}} \\
&\leq \left[\int_{\R^d} M^\D_{q'}h(x)^{r'}\,dx\right]^{\frac{1}{r'}}\left[\int_{\R^d} M^\D g(x)^{r}\,dx\right]^{\frac{1}{r}}\\
&\lesssim \Aonesc{W} \|h\|_{L^{r'}(\R^d)}\|g\|_{L^{r}(\R^d)}
\end{align*}
where we have additionally used that the maximal function $M^\D_{q'}h=M(|h|^{q'})^{\frac1{q'}}$ is bounded on $L^{r'}(\R^d)$ since  $q'<r'=2q'$ and
$$\|M^\D_{q'}\|_{L^{r'}(\R^d)\rightarrow L^{r'}(\R^d)}\leq \left[\left(\frac{r'}{q'}\right)'\right]^{\frac{1}{q'}}\leq C_d.$$
\end{proof}

\begin{remark}
  The proof of Theorem~\ref{thm:MainMax} using Lemma~\ref{MaxSparse}
  has the advantage that it allows us to simultaneously prove
  Theorem~\ref{thm:MainCZO}.    However, it is also possible to
  prove the endpoint estimate for $M_W$ ({ most likely with worse A${}_1$ and A${}_{\infty}^\text{sc}$ dependence}) more directly by modifying the
  original proof of the strong $(p,p)$ estimates due to
  Goldberg~\cite{G}.   This proof also relies on the
  Calder\'on-Zygmund decomposition into  ``good'' and ``bad''
  functions.  The estimate for the bad part is similar to the proof
  given above, while the estimate for the good part is more
  complicated and relies on the operator $N_Q$
  from~\cite{G}.  We leave the details of this proof to the interested
  reader.
\end{remark}

\section{Proof of Theorem \ref{thm:MainComm}}
\label{section:proofs-comm}

The Calder\'{o}n-Zygmund decomposition argument that was used to prove
Theorem \ref{thm:MainCZO} unfortunately does not work to handle the
sparse type operators in Theorem \ref{Thm:ConvexBodyComm} and instead
we need to employ ``slicing" arguments that are similar to the ones in~\cite{CR}.

First, we {  recall} a few  facts about the space $L \log L$ and $\exp L$
needed in the proof.  For details, see~\cite[Chapter~5]{MR2797562}.   Let
$\Phi(t) = t \log (e + t)$.  It is straightforward to show that $\Phi$
is submultiplicative:  for all $s, t > 0$, $\Phi(st) \lesssim \Phi(s)\Phi(t)$.
For a measurable, $\Cn$ valued function
$f$ and a measurable $Q \subset \Rd$ with $0 < |Q| < \infty$, define
the $L\log L$ norm by the Luxemburg norm
\begin{equation}
  \||f|\|_{L \log L, Q} =
  \inf\left\{\lambda > 0 : \dashint_Q \Phi\pr{\frac{|f(y)|}{\lambda}}
    \, dy \leq 1\right\}. \label{LuxNorm}
\end{equation}
The conjugate Young function of $\Phi$ is the function
$\bar{\Phi}(t)=e^t-1$.   We again define the  $\exp L$ norm by the
Luxemburg norm
\begin{equation*}
  \||f|\|_{\exp L, Q} = \inf\left\{\lambda > 0 :
    \dashint_Q { \bar{\Phi}} \pr{\frac{|f(y)|}{\lambda}} \, dy \leq 1\right\}.
\end{equation*}
Then we have the following H\"older inequality for these spaces:
\begin{equation}\label{eqn:OrlHold}
  \dashint_Q |h(x)g(x)|\,dx\lesssim \|h\|_{L\log L,Q}\|g\|_{\exp L,Q}.
\end{equation}
Finally we will need the exponential integrability of $BMO$ functions;
this is a consequence of  the classical John-Nirenberg theorem.  If
$b\in BMO$, then
\begin{equation}
  \|b-b_Q\|_{\exp L,Q}\leq c\|b\|_{BMO}.\label{eqn:JN}
\end{equation}

\medskip

We will { prove} the desired estimate in {  Theorem
\ref{thm:MainComm}} by first bounding the term \eqref{EqCommOne}.
This bound is given by the following lemma.

\begin{lem}
\label{Lem:EndpC1}Let  $\MC{S}$ be a sparse family and $$ \MC{T}_{\MC{S}, b, W} f (x) = \sum_{Q\in\mathcal{S}}|b(x)-b_{Q}|\left(\dashint_{Q}|W(x) W^{-1} (y) f(y)| \, dy\right)\chi_{Q}(x).$$ If $b\in BMO$ then
\[
\left|\{\mathcal{T}_{\mathcal{S},b,W} f(x)>\lambda\}\right|\lesssim \lambda^{-1} \|b\|_{BMO}[W]_{A_{1}} { [W]_{\text{A}_\infty  ^{\text{sc}}} }\max\left\{ \log\left([W]_{A_{1}}+e\right),\Aonesc{W}\right\} \|f\|_{L^1}.
\]
\end{lem}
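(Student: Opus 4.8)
The plan is to strip the matrix weight down to a scalar sparse form and then run a slicing argument, reusing the reverse H\"older step already employed in the proof of Theorem~\ref{thm:MainCZO}. First, exactly as in the opening lines of that proof, I would use a reducing matrix to bound, for each $Q\in\MC{S}$,
\[
\dashint_{Q}|W(x)W^{-1}(y)f(y)|\,dy\le \op{W(x)\W_{Q}^{-1}}\dashint_{Q}\op{\W_{Q}W^{-1}(y)}\,|f(y)|\,dy\lesssim [W]_{\A_{1}}\,\op{W(x)\W_{Q}^{-1}}\innp{|f|}_{Q},
\]
since $\op{\W_{Q}W^{-1}(y)}\approx\dashint_{Q}\op{W(z)W^{-1}(y)}\,dz\le[W]_{\A_{1}}$ for a.e.\ $y\in Q$. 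Writing $\omega_{Q}(x)=\op{W(x)\W_{Q}^{-1}}$, this reduces Lemma~\ref{Lem:EndpC1} to the weak-type estimate
\[
\Big|\Big\{x:\ \sum_{Q\in\MC{S}}|b(x)-b_{Q}|\,\omega_{Q}(x)\,\innp{|f|}_{Q}\chi_{Q}(x)>\lambda\Big\}\Big|\lesssim\frac{\|b\|_{BMO}\,\Aonesc{W}\max\{\log([W]_{\A_{1}}+e),\Aonesc{W}\}}{\lambda}\|f\|_{L^{1}}.
\]

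The second step is to record the two uniform properties of the scalar multiplier $\omega_{Q}$. From~\eqref{AonePropA} and the sharp $\text{A}_\infty$ reverse H\"older inequality there is $c>0$ independent of $W$ such that, with $q=1+c/\Aonesc{W}$, we have $\dashint_{Q}\omega_{Q}^{q}\,dx\lesssim 1$ (hence $\dashint_{Q}\omega_{Q}\,dx\lesssim1$ and $q'\approx\Aonesc{W}$); and the reducing-matrix description of $\A_{1}$ recorded in Section~\ref{section:prelim} gives the pointwise bound $\omega_{Q}(x)\lesssim[W]_{\A_{1}}$ for a.e.\ $x\in Q$. Feeding these into the Orlicz H\"older inequality~\eqref{eqn:OrlHold} together with John--Nirenberg~\eqref{eqn:JN} yields the averaged estimate
\[
\dashint_{Q}|b(x)-b_{Q}|\,\omega_{Q}(x)\,dx\ \lesssim\ \|b\|_{BMO}\,\min\{\log([W]_{\A_{1}}+e),\,\Aonesc{W}\},
\]
the two options coming respectively from $\|\omega_{Q}\|_{L\log L,Q}\lesssim\log([W]_{\A_{1}}+e)$ (using the pointwise bound to truncate) and from a plain H\"older pairing with exponents $q,q'$ together with $(\dashint_{Q}|b-b_{Q}|^{q'})^{1/q'}\lesssim q'\|b\|_{BMO}$.

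With these in hand I would carry out the slicing, which replaces the good/bad splitting used for Theorem~\ref{thm:MainCZO}; that splitting fails here because the factor $|b(x)-b_{Q}|$ destroys the cancellation of the bad part. After normalizing $\lambda=\|b\|_{BMO}=1$, form $\Omega=\{M^{\D}f>1\}=\bigcup_{j}Q_{j}$, so that $|\Omega|\le\|f\|_{L^{1}}$ and, for $x\notin\Omega$, every $Q\in\MC{S}$ with $Q\ni x$ has $\innp{|f|}_{Q}\le1$, while the cubes contained in the $Q_{j}$ are treated by rescaling and iteration. The heart of the matter is to organize $\MC{S}$ by principal cubes with respect to the averages $\innp{|f|}_{Q}$: for $Q$ with principal ancestor $F$, split $b(x)-b_{Q}=(b(x)-b_{F})+(b_{F}-b_{Q})$. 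The first piece is estimated by duality against a test set via~\eqref{eqn:OrlHold}, \eqref{eqn:JN} and the averaged estimate above, producing the commutator cost that combines with the base weak-type bound to give the factor $\max\{\log([W]_{\A_{1}}+e),\Aonesc{W}\}$; the second piece is controlled using $|b_{F}-b_{Q}|\lesssim 1+\log(|F|/|Q|)$ and, after summing the resulting convergent series over each principal block, reduces to the non-commutator sparse operator $\sum_{Q}\omega_{Q}(x)\innp{|f|}_{Q}\chi_{Q}(x)$ whose weak $(1,1)$ bound with constant $\Aonesc{W}$ is exactly what was proved for Theorem~\ref{thm:MainCZO}. Summing over principal blocks using that the principal family and $\{Q_{j}\}$ are sparse gives the asserted $L^{1}$ bound; see~\cite{CR} for the analogous slicing in the scalar weighted setting.

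The main obstacle is the bookkeeping of constants in the slicing. There are three competing sources of growth — the unbounded $BMO$ multiplier $b-b_{Q}$, the reducing-matrix factor $\omega_{Q}$, and the sparse sum over arbitrarily deeply nested cubes — and the argument must be arranged so that the outcome is a genuine $L^{1}$ estimate, absorbing what in the unweighted case is a true $L\log L$ term into the factor $\log([W]_{\A_{1}}+e)$, while producing precisely $[W]_{\A_{1}}\,\Aonesc{W}\,\max\{\log([W]_{\A_{1}}+e),\Aonesc{W}\}$. In particular one has to verify that the price of truncating $\omega_{Q}$ at level $[W]_{\A_{1}}$ in the Orlicz/John--Nirenberg step is compatible with the reverse-H\"older exponent $q'\approx\Aonesc{W}$ inherited from the non-commutator estimate, so that the two costs appear as a maximum rather than as a product.
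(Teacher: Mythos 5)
Your opening reduction --- using reducing matrices to pull out $[W]_{\A_1}$, the sharp reverse H\"older exponent $1+c/\Aonesc{W}$, and John--Nirenberg --- coincides with what the paper does. But the core of the lemma is the weak $(1,1)$ bound for the resulting scalar sparse object $\sum_{Q}|b(x)-b_Q|\,\op{W(x)\W_Q^{-1}}\innp{|f|}_Q\chi_Q(x)$ with the stated constant, and your sketch of that part has genuine gaps. First, the splitting $b(x)-b_Q=(b(x)-b_F)+(b_F-b_Q)$ with $|b_F-b_Q|\lesssim 1+\log(|F|/|Q|)$ does not ``reduce to the non-commutator sparse operator after summing a convergent series'': for fixed $x$ the cubes of a principal block containing $x$ form a chain along which these logarithmic factors grow linearly in the depth, so the series is not pointwise summable; the only saving feature is that the set where the stack is deep has geometrically small measure, and exploiting that is precisely the argument you have not supplied. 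Second, assembling the estimate by summing weak-type bounds over principal blocks and over the Calder\'on--Zygmund cubes ``by rescaling and iteration'' is not legitimate as stated, because $L^{1,\infty}$ is not normable; some level-set or absorption device is required (the paper even remarks that the Calder\'on--Zygmund decomposition used for Theorem~\ref{thm:MainCZO} fails for these operators, since the factor $|b(x)-b_Q|$ destroys the cancellation you would need). Third, you explicitly leave unverified the one quantitative point the lemma is about --- why the answer is $\max\{\log([W]_{\A_1}+e),\Aonesc{W}\}$ rather than a product --- and your averaged bound $\dashint_Q|b-b_Q|\,\op{W(x)\W_Q^{-1}}\,dx\lesssim\min\{\log([W]_{\A_1}+e),\Aonesc{W}\}$ discards exactly the information needed to decide this.

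For comparison, the paper's proof handles all three points with a single mechanism you did not use: it sets $G=\{\MC{T}_{\MC{S},b,W}f>1\}\setminus\{M(|f|)>1\}$, estimates $|G|\le\int_G\MC{T}_{\MC{S},b,W}f$, and applies on each $Q$ a three-factor H\"older inequality with exponents $s$ and $2s'$ that \emph{retains} the factor $\langle\chi_G\rangle_Q^{1/(2s')}$ alongside $(\dashint_Q|b-b_Q|^{2s'})^{1/(2s')}\lesssim s'\approx\Aonesc{W}$. It then slices $\MC{S}$ into families $\MC{S}_{k,j}$ according to the dyadic sizes of $\innp{|f|}_Q$ and $\langle\chi_G\rangle_Q^{1/(2s')}$, and proves two competing bounds for each slice: one of size $\sim 2^{-k}[W]_{\A_1}\Aonesc{W}$ from the disjointness of the sets $E_Q$ (Carleson property), and one of size $\sim 2^{-j}2^{k(2s'-1)+2s'}[W]_{\A_1}\Aonesc{W}|G|$ from the weak $(1,1)$ bound for $M\chi_G$. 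Summing the minimum of the two with the cutoff $j\approx\log_2([W]_{\A_1}\Aonesc{W})+k(2s'-1)+2s'+k$ is what produces the maximum instead of a product and lets the $|G|$-terms be absorbed as $\tfrac12|G|$. If you wish to keep the principal-cube outline, you would still need an absorption or slicing mechanism of this type; as written, the proposal does not prove the lemma.
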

\begin{proof}
Without loss of generality we may assume that $\lambda  = 1$ and $\|f\|_{L^1} = \|b\|_{BMO} = 1$. Furthermore, we may assume that $\MC{S}$ is $\frac45$ sparse.   If
\[
G=\{|\mathcal{T}_{\mathcal{S},b,W}f(x)|>1\}\setminus\{M(|f|)(x)>1\}.
\]
then
it suffices to prove that
\[
|G|\leq c_{d}[W]_{A_{1}} { [W]_{\text{A}_\infty  ^{\text{sc}}} }\max\left\{ \log\left([W]_{A_{1}}+e\right), { [W]_{\text{A}_\infty  ^{\text{sc}}} }\right\} +\frac{1}{2}|G|.
\]
Let $g=\chi_{G}$ and as before choose $s=1+\frac{c}{\Aonesc{W}}$ with $c$ independent of $W$ where $$ \pr{\dashint_Q \op{W(x) \W_Q ^{-1} } ^s \, dx} ^\frac{1}{s} \lesssim  \dashint_Q \op{W(x) \W_Q ^{-1}}  \, dx \approx \op{\W_Q \W_Q ^{-1}} = 1.  $$
We then have $$ |G| = \abs{\left\{ x \in G : \MC{T}_{\MC{S}, b, W} f(x)   > 1\right\}} \leq \int_G \MC{T}_{\MC{S}, b, W} f(x) \, dx $$ while

\begin{align*}\int_G & \MC{T}_{\MC{S}, b, W} f(x) \, dx
\\ & = \sum_{Q \in \MC{S}} \dashint_{Q}\int_{Q\cap G}|b(x)-b_{Q}|{ \abs{W(x)W^{-1}(y)f(y)}}dxdy
\\
 & \leq \sum_{Q \in \MC{S}} \dashint_{Q} g(x) |b(x)-b_{Q}|\op{W(x)\mathcal{W}_{Q}^{-1}}dx\int_{Q}|\mathcal{W}_{Q}W^{-1}(y)f(y)|dy
 \\& \leq \sum_{Q \in \MC{S}} \left(\dashint_{Q}\op{W(x)\mathcal{W}_{Q}^{-1}}^{s}dx\right)^{\frac{1}{s}}\left(\dashint_{Q}g\right)^{\frac{1}{2s'}}\left(\dashint_{Q}|b(x)-b_{Q}|^{2s'} \, dx \right)^{\frac{1}{2s'}}
 \\ & \qquad \times \int_{Q}
{ \abs{\mathcal W_{Q}W^{-1}(y)f(y)}}dy |Q|
\\ & \leq c_{d}[W]_{A_{1}}\Aonesc{W}\sum_{Q \in \MC{S}} \left(\dashint_{Q}g\right)^{\frac{1}{2s'}}\dashint_{Q}|f(y)|dy \, |Q|.
\end{align*}
{Here we have used the following corollary of John-Nirenberg inequality 
$$\left(\dashint_{Q}|b(x)-b_{Q}|^{2s'} \, dx \right)^{\frac{1}{2s'}}{  \lesssim} s'\|b\|_{BMO} \approx \Aonesc{W}$$
see for instance \cite[Corollary 3.10 p.166]{GCRdF}.}
Now we split the sparse family as follows: We say $Q\in\mathcal{S}_{k,j}$,
$k,j\geq0$ if
\[
\begin{split}2^{-j-1} & <\dashint_{Q}|f(y)|dy\leq2^{-j}, \\
2^{-k-1} & <\langle g\rangle_{Q}^\frac{1}{2s'} \leq2^{-k}
\end{split}
\]
Then

\begin{align*}\int_G  \MC{T}_{\MC{S}, b, W} f(x) \, dx &\leq c_{d}[W]_{A_{1}}\Aonesc{W}\sum_{j=0}^{\infty}\sum_{k=0}^{\infty}\sum_{Q\in\mathcal{S}_{k,j}}\left(\dashint_{Q}g\right)^{\frac{1}{2s'}}\dashint_{Q}
|f(y)|dy|Q|\\
 & =\sum_{k=0}^{\infty}\sum_{j=0}^{\infty}s_{k,j}.
\end{align*}
Now we observe that
\[
s_{k,j}\leq\min\left\{ c_{d} 2\cdot2^{-k}[W]_{A_{1}}\Aonesc{W}, \,  c_{d}2^{-j}2^{k(2s'-1)+2s'}[W]_{A_{1}}\Aonesc{W}|G|\right\} := \alpha_{k, j}.
\]
For the first estimate we argue as follows. Let $E_{Q}=Q\setminus\bigcup_{\substack{Q'\in\mathcal{S}_{j,k} \\ Q' \subsetneq Q}}Q'$.
Then
\begin{align*} \int_{Q}|f(y)|dy &= \int_{E_{Q}}|f(y)|dy+\int_{\bigcup_{Q'\subsetneq Q}}|f(y)|dy
\\ & \leq \int_{E_{Q}}|f(y)|dy+\sum_{Q'\subsetneq Q}\int_{Q'}|f(y)|dy. \end{align*}
For the second term,
\[
\begin{split}\sum_{Q'\subsetneq Q}\int_{Q'}|f(y)|dy & \leq2^{-j}\sum_{Q'\subsetneq Q,Q'\in\mathcal{S}_{k,j}}|Q'|\\
 & \leq2^{-j - 2}|Q|\\
 & \leq\frac12 \int_{Q}|f(y)| \, dy
\end{split}
\]
since $\MC{S}$ is $\frac45$ sparse and thus $\frac54$ Carleson.  Thus,
\[
\int_{Q}|f(y)|dy\leq2\int_{E_{Q}}|f(y)| \, dy.
\]
which means that
\[
\begin{split}s_{k,j} & \leq2\sum_{Q\in\mathcal{S}_{j,k}}\int_{E_{Q}}|f|\left(\dashint_{Q}g\right)^{\frac{1}{2s'}}\\
 & \leq { 2 \cdot} 2^{-k}\sum_{Q\in\mathcal{S}_{j,k}}\int_{E_{Q}}|f|\\
 & \leq2\cdot2^{-k}\int_{\mathbb{R}^{d}}|f|=2\cdot2^{-k}.
\end{split}
\]
For the second estimate of $s_{k, j}$, let $S_{j, k} ^*$ denote the maximal cubes in $S_{j, k}$.  Then
\begin{equation}
\begin{split}s_{k,j} & \leq2^{-j}2^{-k}\sum_{Q\in\mathcal{S}_{j,k}}|Q|\\
 & \leq2^{-j}2^{-k}\sum_{Q\in\mathcal{S}_{j,k}^{*}}\sum_{P\subseteq Q}|P|\\
 & \leq \frac54 2^{-j}2^{-k}\sum_{Q\in\mathcal{S}_{j,k}^{*}}|Q|\\
 & = \frac54 2^{-j}2^{-k}\left|\bigcup_{Q\in\mathcal{S}_{j,k}}Q\right|\\
 & =\frac54 2^{-j}2^{-k}\left|\left\{ x\in\mathbb{R}^{d}\,:\,Mg>2^{-2s'k-2s'}\right\} \right|\\
 & \leq c_{d}2^{-j}2^{k(2s'-1)+2s'}|G|.
\end{split}
\label{eq:G-1}
\end{equation}
Since $\MC{S}$ is $\frac54$ Carleson.  Putting all this together, we obtain $$|G| \leq \sum_{k = 0}^\infty \sum_{j = 0}^\infty \alpha_{k ,j}.$$
Pick some $\gamma > 0$ to be determined momentarily.  To finish the proof we will estimate the double sum
\begin{align*}
\sum_{k=0}^{\infty}\sum_{j=0}^{\infty}\alpha_{k,j} & =\sum_{j\geq\left\lceil \log_{2}\left([W]_{A_{1}}[W]_{A_{\infty}}\gamma\right)\right\rceil
  +\left\lceil k(2s'-1)+2s'\right\rceil +k}\alpha_{k,j}
  \\ & \qquad +\sum_{j<\left\lceil \log_{2}\left([W]_{A_{1}}[W]_{A_{\infty}}\gamma\right)\right\rceil +\left\lceil k(2s'-1)+2s'\right\rceil +k}\alpha_{k,j}.
\end{align*}

For the first term
\begin{align*} & \sum_{j\geq\left\lceil \log_{2}\left([W]_{A_{1}}[W]_{A_{\infty}}\gamma\right)\right\rceil +\left\lceil k(2s'-1)+2s'\right\rceil +k}\alpha_{k,j}
 \\ & \leq c_{d}[W]_{A_{1}}\Aonesc{W}|G|\sum_{k=0}^{\infty}2^{k(2s'-1)+2s'}\sum_{j\geq\left\lceil \log_{2}\left([W]_{A_{1}}\Aonesc{W}\gamma\right)\right\rceil +\left\lceil k(2s'-1)+2s'\right\rceil +k}2^{-j}
 \\ & =c_{d}[W]_{A_{1}}\Aonesc{W}|G|\sum_{k=0}^{\infty}2^{k(2s'-1)+2s'}2^{-\left\lceil \log_{2}\left([W]_{A_{1}}\Aonesc{W}\gamma\right)\right\rceil -\left\lceil k(2s'-1)+2s'\right\rceil -k}\\
 & =c_{d}[W]_{A_{1}}\Aonesc{W}|G|\sum_{k=0}^{\infty}2^{k(2s'-1)+2s'}2^{-\left\lceil \log_{2}\left([W]_{A_{1}}\Aonesc{W}\gamma\right)\right\rceil -\left\lceil k(2s'-1)+2s'\right\rceil -k}\\
 & \leq\frac{c_{d}[W]_{A_{1}}\Aonesc{W}}{[W]_{A_{1}}\Aonesc{W}\gamma}|G|\sum_{k=0}^{\infty}2^{-k}\leq\frac{2c_{d}}{\gamma}|G|.
\end{align*}
And it suffices to choose $\gamma=4c_{d}$. For the second term

\begin{align*} & \sum_{j<\left\lceil \log_{2}\left([W]_{A_{1}}\Aonesc{W}4c_{d}\right)\right\rceil +k}\alpha_{k,j}
\\ & \leq {  c_{d}} 2\cdot\sum_{k=0}^{\infty}\sum_{1\leq j<\left\lceil \log_{2}\left([W]_{A_{1}}\Aonesc{W}4c_{d}\right)\right\rceil +\left\lceil k(2s'-1)+2s'\right\rceil +k}2^{-k}[W]_{A_{1}}\Aonesc{W}
\\  & \leq c_{d}2\cdot\sum_{k=0}^{\infty}\left(\log_{2}\left([W]_{A_{1}}\Aonesc{W}4c_{d}\right)+2(k+1)s'\right)2^{-k}[W]_{A_{1}}\Aonesc{W}
\\ & \leq c_{d}[W]_{A_{1}}\Aonesc{W}\max\left\{ \log\left([W]_{A_{1}}+e\right), \, \Aonesc{W}\right\}.
\end{align*}

Combining all the preceding estimates we are done.
\end{proof}

We now finish the proof of Theorem \ref{thm:MainComm} by estimating \eqref{EqCommTwo}.  More precisely, standard bounds on $M_{\Phi}$ in conjunction with the following lemma will finish the proof of Theorem \ref{thm:MainComm}. To prove the following Lemma we will need the exponential integrability of $BMO$ functions and   \begin{lem}
Let $\mathcal{S}$ be a $\frac{2 \Phi(2)}{2\Phi(2) + 1}$ sparse family and let $$\MC{T}_{S, b, W} ^* f (x) = \sum_{Q \in \MC{S}}   \dashint_Q \pr{|b(y) - b_Q| \, |W(x) W^{-1}(y) f(y)| \, dy }\chi_Q(x).$$ If $b\in BMO$ with $\|b\|_{BMO}=1$ and
\[
G =\{|\mathcal{T}_{b,W}^{*}f(x)|>1\}\setminus\{M_{\Phi}\left(|f|\right)(x)>1\}
\]
then we have that
\[
\left|G\right|\leq c_{d}[W]_{A_{1}}\max\left\{ \log\left([W]_{A_{1}}+e\right), \, \Aonesc{W} \right\} ^{2}\int_{\mathbb{R}^{d}}\Phi\left(|f|\right)+\frac{1}{2}|G|.
\]
\end{lem}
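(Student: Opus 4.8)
The plan is to follow the proof of Lemma~\ref{Lem:EndpC1} almost line by line, replacing the $L^1$ averages of $f$ by $L\log L$ averages and carrying along the extra logarithmic loss that the Orlicz norm forces into each cube. By a standard truncation we may assume $\MC{S}$ is finite and $f$ compactly supported, so that $|G|<\infty$ and all sums converge. Since $\MC{T}_{\MC{S},b,W}^{*}f\ge0$, Chebyshev gives $|G|\le\int_G\MC{T}_{\MC{S},b,W}^{*}f$, and, inserting $\W_Q^{-1}\W_Q$, using $|W(x)W^{-1}(y)f(y)|\le\op{W(x)\W_Q^{-1}}\,|\W_QW^{-1}(y)f(y)|$, and Tonelli,
\[
|G|\le\sum_{Q\in\MC{S}}\pr{\int_{Q\cap G}\op{W(x)\W_Q^{-1}}\,dx}\pr{\dashint_Q|b(y)-b_Q|\,|\W_QW^{-1}(y)f(y)|\,dy}.
\]
Exactly as in Lemma~\ref{Lem:EndpC1} and Theorem~\ref{thm:MainCZO}, by \eqref{AonePropA} and the sharp $A_\infty$ reverse H\"older inequality I would choose $s=1+c/\Aonesc{W}$ with $c$ absolute so that $\bigl(\dashint_Q\op{W(x)\W_Q^{-1}}^s\,dx\bigr)^{1/s}\lesssim1$; H\"older then yields $\int_{Q\cap G}\op{W(x)\W_Q^{-1}}\,dx\lesssim|Q|\,\innp{\chi_G}_Q^{1/s'}$, and note $s'\approx\Aonesc{W}$. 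For the $y$-average I use $\op{\W_QW^{-1}(y)}\lesssim[W]_{\A_1}$ for a.e.\ $y\in Q$, the Orlicz--H\"older inequality \eqref{eqn:OrlHold}, and John--Nirenberg \eqref{eqn:JN} (recall $\|b\|_{BMO}=1$) to obtain $\dashint_Q|b(y)-b_Q|\,|\W_QW^{-1}(y)f(y)|\,dy\lesssim[W]_{\A_1}\|b-b_Q\|_{\exp L,Q}\|f\|_{L\log L,Q}\lesssim[W]_{\A_1}\|f\|_{L\log L,Q}$. Hence $|G|\lesssim[W]_{\A_1}\sum_{Q\in\MC{S}}|Q|\,\innp{\chi_G}_Q^{1/s'}\|f\|_{L\log L,Q}$.

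Next I slice. Put $g=\chi_G$; only cubes meeting $G$ contribute, and for those $\|f\|_{L\log L,Q}\le M_{\Phi}(|f|)\le1$, so for $j,k\ge0$ I set $\MC{S}_{k,j}=\{Q\in\MC{S}:2^{-j-1}<\|f\|_{L\log L,Q}\le2^{-j},\ 2^{-k-1}<\innp{g}_Q^{1/s'}\le2^{-k}\}$ and $s_{k,j}=\sum_{Q\in\MC{S}_{k,j}}|Q|\,\innp{g}_Q^{1/s'}\|f\|_{L\log L,Q}$, so that $|G|\lesssim[W]_{\A_1}\sum_{k,j\ge0}s_{k,j}$. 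I would then bound $s_{k,j}$ in two ways. The first (``level set'') bound is as in Lemma~\ref{Lem:EndpC1}: since $\bigcup_{Q\in\MC{S}_{k,j}}Q\subseteq\{Mg>2^{-(k+1)s'}\}$ and $\MC{S}$ is Carleson,
\[
s_{k,j}\le2^{-k-j}\sum_{Q\in\MC{S}_{k,j}}|Q|\lesssim2^{-k-j}\,\Abs{\{Mg>2^{-(k+1)s'}\}}\lesssim2^{-j}2^{k(s'-1)+s'}|G|.
\]
The second (``subtraction'') bound is where the precise sparseness constant $\frac{2\Phi(2)}{2\Phi(2)+1}$ (equivalently, $1+\frac1{2\Phi(2)}$ Carleson) is used. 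With $E_Q=Q\setminus\bigcup_{Q'\in\MC{S}_{k,j},\,Q'\subsetneq Q}Q'$, the submultiplicativity $\Phi(2t)\le\Phi(2)\Phi(t)$ turns $\|f\|_{L\log L,Q}>2^{-j-1}$ into $\dashint_Q\Phi(2^j|f|)>\Phi(2)^{-1}$, while $\|f\|_{L\log L,Q'}\le2^{-j}$ gives $\int_{Q'}\Phi(2^j|f|)\le|Q'|$ for $Q'\subsetneq Q$ in $\MC{S}_{k,j}$; the Carleson bound then forces $\int_Q\Phi(2^j|f|)\le2\int_{E_Q}\Phi(2^j|f|)$, hence $|Q|<2\Phi(2)\int_{E_Q}\Phi(2^j|f|)$. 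Feeding this in together with $\|f\|_{L\log L,Q}\le2^{-j}$, the elementary bound $2^{-j}\Phi(2^jt)\lesssim(j+1)\Phi(t)$ ($j\ge0$), and the disjointness of the sets $E_Q$, $Q\in\MC{S}_{k,j}$, gives
\[
s_{k,j}\le2^{-k}\sum_{Q\in\MC{S}_{k,j}}|Q|\,\|f\|_{L\log L,Q}\lesssim2^{-k}(j+1)\int_{\R^d}\Phi(|f|).
\]

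For the double sum I would fix a large absolute $\gamma$ and split each inner sum at $j_0(k)=\lceil\log_2([W]_{\A_1}\gamma)\rceil+\lceil k(s'-1)+s'\rceil+k$. For $j\ge j_0(k)$ the level-set bound and the geometric series in $j$ give $[W]_{\A_1}\sum_k\sum_{j\ge j_0(k)}s_{k,j}\lesssim\gamma^{-1}|G|\sum_k2^{-k}\le\frac12|G|$ once $\gamma$ is large (this is the absorbing term, and the $+k$ in $j_0(k)$ is what makes the $k$-series converge). For $j<j_0(k)$ the subtraction bound gives $\sum_{j<j_0(k)}(j+1)\le j_0(k)^2$, and since $s'\approx\Aonesc{W}$ and $\log_2[W]_{\A_1}\lesssim\log([W]_{\A_1}+e)$ one has $j_0(k)\lesssim(k+1)\max\{\log([W]_{\A_1}+e),\Aonesc{W}\}$, so
\[
[W]_{\A_1}\sum_k\sum_{j<j_0(k)}s_{k,j}\lesssim[W]_{\A_1}\max\{\log([W]_{\A_1}+e),\Aonesc{W}\}^2\Bigl(\sum_k2^{-k}(k+1)^2\Bigr)\int_{\R^d}\Phi(|f|),
\]
and the $k$-series is a finite constant. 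Adding the two contributions yields $|G|\le\frac12|G|+c_d[W]_{\A_1}\max\{\log([W]_{\A_1}+e),\Aonesc{W}\}^2\int_{\R^d}\Phi(|f|)$, as claimed.

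The step I expect to be the main obstacle is the second bound on $s_{k,j}$. In Lemma~\ref{Lem:EndpC1} the sum $\sum_Q|Q|\,\innp{|f|}_Q$ localizes to $\sum_Q\int_{E_Q}|f|\le\|f\|_{L^1}$ with no loss; here the $L\log L$ average does not localize to $E_Q$ for free, and one pays both a factor $\Phi(2)$ --- which is exactly why the sparseness constant must be $\frac{2\Phi(2)}{2\Phi(2)+1}$ --- and, more importantly, a factor $j+1$ coming from $2^{-j}\Phi(2^jt)\lesssim(j+1)\Phi(t)$. It is precisely this linear-in-$j$ loss that, once run through the double summation, replaces the single power of $\max\{\log([W]_{\A_1}+e),\Aonesc{W}\}$ in Lemma~\ref{Lem:EndpC1} by the square appearing here (and hence in Theorem~\ref{thm:MainComm}).
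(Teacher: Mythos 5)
Your proposal is correct and follows essentially the same route as the paper's proof: the same reduction via the reverse Hölder exponent $s=1+c/\Aonesc{W}$, Orlicz–Hölder with John–Nirenberg on the $y$-average, the identical $(k,j)$ slicing, the same two bounds on $s_{k,j}$ (the $E_Q$-localization using the $1+\tfrac{1}{2\Phi(2)}$ Carleson property with the $(j+1)$ loss from $2^{-j}\Phi(2^j t)\lesssim (j+1)\Phi(t)$, and the level-set bound via $Mg$), and the same splitting of the double sum at $j_0(k)$ with absorption. The only differences are cosmetic (e.g., $j+1$ versus the paper's $j$, and making explicit why $j\ge 0$ suffices), so nothing further is needed.
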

\begin{proof}
 Since $b\in BMO$ with $\|b\|_{BMO}=1$, by \eqref{eqn:JN} there exists a constant $c_d$ such that
 $$\|b-b_Q\|_{\exp L,Q}\leq c_d.$$
 If we again denote $g=\chi_{G}$ and proceed as in the proof of Lemma \ref{Lem:EndpC1} then

\begin{align*} \lefteqn{ \sum_{Q \in \MC{S}}  \dashint_{Q}\int_{Q\cap G}|b(y)-b_{Q}|\abs{W(x)W^{-1}(y)f(y)}dxdy}\\
 & \leq \sum_{Q \in \MC{S}} \dashint_{Q}\op{W(x)\W_{Q}^{-1}} g(x) \, dx\dashint_{Q}|b(y)-b_{Q}| |\W_{Q}W^{-1}(y)f(y)|dy \, |Q|\\
 & \leq \sum_{Q \in \MC{S}} {\left(\dashint_{Q}\op{W(x)\W_{Q}^{-1}}^{s}dx\right)^{\frac{1}{s}}}\left(\dashint_{Q}g\right)^{\frac{1}{s'}}\dashint_{Q}|b(y)-b_{Q}|
|\W_{Q}W^{-1}(y)f(y)|dy|Q|
\\  & \leq \sum_{Q \in \MC{S}} {\left(\dashint_{Q}\op{W(x)\W_{Q}^{-1}}^{s}dx\right)^{\frac{1}{s}}}[W]_{A_{1}}\left(\dashint_{Q}g\right)^{\frac{1}{s'}}\dashint_{Q}|b(y)-b_{Q}||f(y)|dy|Q|\\ & \leq c_{d}[W]_{A_{1}} \sum_{Q \in \MC{S}}  {\dashint_{Q}\op{W(x)\W_{Q}^{-1}} dx}\left(\dashint_{Q}g\right)^{\frac{1}{s'}} \|b-b_Q\|_{\exp L,Q}\||f|\|_{L\log L,Q}|Q|\\
 & \leq c_{d}[W]_{A_{1}}  \sum_{Q \in \MC{S}}  \left(\dashint_{Q}g\right)^{\frac{1}{s'}}\||f|\|_{L\log L,Q}|Q|.
\end{align*}

As before we say $Q\in\mathcal{S}_{k,j}$ if

\[
\begin{split}2^{-j-1} & <\big\||f|\big\|_{L\log L,Q}\leq2^{-j},\\
2^{-k-1} & <\langle g\rangle_{Q} ^\frac{1}{s'} \leq2^{-k}.
\end{split}
\]
Then
\[
\begin{split} & \sum_{Q\in\mathcal{S}}\left(\dashint_{Q}g\right)^{\frac{1}{s'}}\big\||f|\big\|_{L\log L,Q}|Q|\\
 & =\sum_{j=0}^{\infty}\sum_{k=0}^{\infty}\sum_{Q\in\mathcal{S}_{k,j}}\left(\dashint_{Q}g\right)^{\frac{1}{s'}}\big\||f|\big\|_{L\log L,Q}|Q|\\
 & =\sum_{k=0}^{\infty}\sum_{j=0}^{\infty}s_{k,j}.
\end{split}
\]
Now we are going to prove that
\[
s_{k,j}\leq \min\left\{
c_{d}2^{-k}j\int_{\mathbb{R}^{d}}\Phi\left(|f(y)|\right)dy, \,
c_{d}2^{-j}2^{k(s'-1)+s'}|G|\right\} := \alpha_{k, j}.
\]
We start with the first estimate. Again let $\displaystyle E_{Q} = Q\setminus\bigcup_{\substack{Q'\in\mathcal{S}_{k,j} \\ Q' \subsetneq Q}} Q'$.
First we note that
\[
\int_{Q}\Phi\left(2^{j}|f(y)|\right)dy\leq2\int_{E_{Q}}\Phi\left(2^{j}|f(y)|\right)dy.
\]
Indeed, since $\mathcal{S}_{k,j}$ is $1+\frac{1}{2\Phi(2)}$-Carleson and $2^{-j-1} <\||f|\|_{L\log L,Q}\leq2^{-j}$ we have that
\[
\begin{split} & \int_{Q}\Phi\left(2^{j}|f(y)|\right)dy\\
 & \leq\int_{E_{Q}}\Phi\left(2^{j}|f(y)|\right)dy+\sum_{Q'\subsetneq Q,Q'\in\mathcal{S}_{k,j}}\int_{Q'}\Phi\left(2^{j}|f(y)|\right)dy.\\
 & \leq\int_{E_{Q}}\Phi\left(2^{j}|f(y)|\right)dy+\sum_{Q'\subsetneq Q,Q'\in\mathcal{S}_{k,j}}|Q'|\\
 & \leq\int_{E_{Q}}\Phi\left(2^{j}|f(y)|\right)dy+\frac{1}{2 \Phi(2)}\int_{Q}\Phi\left(2^{j+1}|f(y)|\right)dy\\
 & \leq\int_{E_{Q}}\Phi\left(2^{j}|f(y)|\right)dy+ \frac{1}{2}\int_{Q}\Phi\left(2^{j}|f(y)|\right)dy.
\end{split}
\]
Now we observe that
\[
\begin{split}s_{k,j} & \leq2^{-j}\sum_{Q\in\mathcal{S}_{j,k}}\int_{Q}\Phi\left(2^{j+1} |f(y)|\right)dy\left(\dashint_{Q}g\right)^{\frac{1}{s'}}\\
 & \leq c_{d}{ 2 \cdot }2^{-k}2^{-j} \Phi(2) \sum_{Q\in\mathcal{S}_{j,k}}\int_{E_{Q}}\Phi\left(2^{j} |f(y)|\right)dy\\
 & \leq c_{d}2^{-k}\Phi(2)j\int_{\mathbb{R}^{d}}\Phi\left(|f(y)|\right)dy
\end{split}
\]
{ while} the other estimate for $s_{j, k}$ follows from \eqref{eq:G-1}.  Combining the preceding estimates we have that
\[
\begin{split}|G| & \leq c_{d}[W]_{A_{1}}\sum_{k=0}^{\infty}\sum_{j=0}^{\infty}s_{k,j}\\
 & \leq\sum_{k=0}^{\infty}\sum_{j=0}^{\infty}\min\left\{ [W]_{A_{1}}c_{d}2^{-k}j\int_{\mathbb{R}^{d}}\Phi\left(|f(y)|\right)dy, \, c_{d}2^{-j}2^{k(s'-1)+s'}[W]_{A_{1}}|G|\right\} \\
 & =\sum_{k=0}^{\infty}\sum_{j=0}^{\infty}\alpha_{k,j}
\end{split}
\]
Now we are left with estimating the double sum. We proceed as follows.
\[
\sum_{k=0}^{\infty}\sum_{j=0}^{\infty}\alpha_{k,j}=\sum_{j\geq\left\lceil \log_{2}\left([W]_{A_{1}}\gamma\right)\right\rceil +\left\lceil k(s'-1)+s'\right\rceil +k}\alpha_{k,j}+\sum_{j<\left\lceil \log_{2}\left([W]_{A_{1}}\gamma\right)\right\rceil +\left\lceil k(s'-1)+s'\right\rceil +k}\alpha_{k,j}
\]
For the first term

\begin{align*}\lefteqn{\sum_{j\geq\left\lceil \log_{2}\left([W]_{A_{1}}\gamma\right)\right\rceil +\left\lceil k(s'-1)+s'\right\rceil +k}\alpha_{k,j} }\\
&\leq c_{d}[W]_{A_{1}}|G|\sum_{k=0}^{\infty}2^{k(s'-1)+s'}\sum_{j\geq\left\lceil \log_{2}\left([W]_{A_{1}}\gamma\right)\right\rceil +\left\lceil k(s'-1)+s'\right\rceil +k}2^{-j}\\
 & =c_{d}[W]_{A_{1}}|G|\sum_{k=0}^{\infty}2^{k(s'-1)+s'}2^{-\left\lceil \log_{2}\left([W]_{A_{1}}\gamma\right)\right\rceil -\left\lceil k(s'-1)+s'\right\rceil -k}\\
 & =c_{d}[W]_{A_{1}}|G|\sum_{k=0}^{\infty}2^{k(s'-1)+s'}2^{-\left\lceil \log_{2}\left([W]_{A_{1}}\gamma\right)\right\rceil -\left\lceil k(s'-1)+s'\right\rceil -k}\\
 & \leq\frac{c_{d}[W]_{A_{1}}}{[W]_{A_{1}}\gamma}|G|\sum_{k=0}^{\infty}2^{-k} \\ &
 { \leq \frac{2c_{d}}{\gamma}|G| = \frac12 |G|,}
\end{align*}

 {  where for the last equality we choose $\gamma=4c_{d}$.} For the second term

\begin{align*} \lefteqn{ \sum_{j<\left\lceil \log_{2}\left([W]_{A_{1}}\gamma\right)\right\rceil +\left\lceil k(s'-1)+s'\right\rceil +k}\alpha_{k,j}}\\
 & \leq c_{d}2[W]_{A_{1}}\int_{\mathbb{R}^{d}}\Phi\left(|f(y)|\right)dy\sum_{k=0}^{\infty}2^{-k}\sum_{1\leq j<\left\lceil \log_{2}\left([W]_{A_{1}}\gamma\right)\right\rceil +\left\lceil k(s'-1)+s'\right\rceil +k}j\\
 & \leq c_{d}2[W]_{A_{1}}\int_{\mathbb{R}^{d}}\Phi\left(|f(y)|\right)dy\sum_{k=0}^{\infty}2^{-k}\left(\left\lceil \log_{2}\left([W]_{A_{1}}\gamma\right)\right\rceil +\left\lceil k(s'-1)+s'\right\rceil +k\right)^{2}\\
 & \leq c_{d}2[W]_{A_{1}}\max\left\{ \log\left([W]_{A_{1}}+e\right), \, \Aonesc{W}\right\} ^{2}\int_{\mathbb{R}^{d}}\Phi\left(|f(y)|\right)dy.\\
\end{align*}

\end{proof}

\bibliography{Matrixbumps}

\end{document}